\documentclass[11pt,a4paper]{article}
\usepackage{cmap} 
\usepackage[utf8]{inputenc}			
\usepackage[T1]{fontenc}            
\usepackage{a4wide}
\usepackage{amssymb,amsmath,amsthm,bm}
\usepackage{mathtools}
\usepackage[english]{babel}
\usepackage{tikz}
\usetikzlibrary{arrows}
\usepackage{graphicx}
\usepackage{enumitem}

\usepackage{hyperref}
\usepackage[ruled,vlined]{algorithm2e}


\newtheorem{theorem}{Theorem}
\newtheorem{corollary}[theorem]{Corollary}
\newtheorem{lemma}[theorem]{Lemma}

\newtheorem{claim}[theorem]{Claim}

\newtheorem{obs}[theorem]{Observation}
\newtheorem{defi}[theorem]{Definition}

\newlist{proplist}{enumerate}{3}
\setlist[proplist]{label=(\emph{\roman*}),ref=(\roman*)}

\newcommand\eps{{\varepsilon}}
\newcommand\Dc{{\mathcal{D}}}
\newcommand\Pc{{\mathcal{P}}}

\newcommand\Sc{{\mathcal{S}}}
\newcommand\Hc{{\mathcal{H}}}
\newcommand\Bc{{\mathcal{B}}}
\newcommand\Fc{{\mathcal{F}}}
\newcommand\Gc{{\mathcal{G}}}
\newcommand\Fcal{{\mathcal{F}}}
\newcommand\Ac{{\mathcal{A}}}
\newcommand\Bcal{{\mathcal{B}}}

\newcommand{\Des}{Des}
\newcommand{\Sib}{\Dc_{Sib}}
\newcommand{\Fix}{\Dc_{Des}}
\newcommand{\Pfix}{\Pc_{fix}}

\title{Realizing an $m$-uniform four-chromatic hypergraph with disks}

\author{
 Gábor Damásdi\\\small MTA-ELTE Lendület\\ \small   Combinatorial Geometry Research Group  \and Dömötör Pálvölgyi\\\small MTA-ELTE Lendület\\ \small   Combinatorial Geometry Research Group}
 
\index{Damásdi, Gábor}
\index{Pálvölgyi, Dömötör}


\begin{document}
\thispagestyle{empty}
\maketitle

\begin{abstract}
    We prove that for every $m$ there is a finite point set $\Pc$ in the plane such that no matter how $\Pc$ is three-colored, there is always a disk containing exactly $m$ points, all of the same color.
    This improves a result of Pach, Tardos and T\'oth who proved the same for two colors.
    The main ingredient of the construction is a subconstruction whose points are in convex position. 
    Namely, we show that for every $m$ there is a finite point set $\Pc$ in the plane in convex position such that no matter how $\Pc$ is two-colored, there is always a disk containing exactly $m$ points, all of the same color.
    We also prove that for unit disks no similar construction can work, and several other results.
\end{abstract}

\section{Introduction}

Coloring problems for hypergraphs defined by geometric range spaces have been studied a lot in different settings.
A pair $(\Pc, \Sc)$, where $\Pc$ is a set of points in the plane and $\Sc$ is a family of subsets of the plane (the \emph{range space}), defines a (primal) hypergraph $\Hc(\Pc,\Sc)$ whose vertex set is $\Pc$, and for each $S\in\Sc$ we add the edge $S\cap \Pc$ to the hypergraph. 
Given any hypergraph $\Gc$, a planar realization of $\Gc$ is defined as a pair $(\Pc, \Sc)$ for which $\Hc(\Pc,\Sc)$ is isomorphic to $\Gc$.
If $\Gc$ can be realized with some pair $(\Pc, \Sc)$ where $\Sc$ is from some family $\Fc$, then we say that $\Gc$ is realizable with $\Fc$.

A hypergraph is (properly) $c$-colorable if its vertices can be colored by
$c$ colors such that no edge is monochromatic.
In this paper we focus on the $c$-colorability of hypergraphs realizable with disks.
It is an easy consequence of the properties of Delaunay-triangulations and the Four Color Theorem that any hypergraph realizable with disks is four-colorable if every edge contains at least two vertices.
Since $K_4$ is realizable with disks, this is sharp.
But are less colors sufficient if all edges are required to contain at least $m$ vertices for some large enough constant $m$?
Pach, Tardos and Tóth \cite{MR2364757} have shown that two colors are not enough for any $m$, i.e., 
for any $m$, there exists an $m$-uniform hypergraph that is not two-colorable and that permits a planar realization with disks.
Our main theorem is the following strengthening, 
which shows that three colors are also not enough, by realizing a four-chromatic hypergraph, completely resolving this question.

\begin{theorem}\label{thm:main}
For any $m$, there exists an $m$-uniform hypergraph that is not three-colorable and that permits a planar realization with disks.
\end{theorem}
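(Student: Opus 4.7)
The strategy is the two-stage plan flagged in the abstract. Stage one is the auxiliary lemma: for every $m$ there is a finite point set $\Pc_0$ in convex position such that every two-coloring of $\Pc_0$ leaves some disk meeting $\Pc_0$ in exactly $m$ monochromatic points. Stage two assembles several copies of $\Pc_0$ in a non-convex arrangement to force the third color.

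Stage one rests on the observation that if $\Pc_0$ is in convex position, then the trace of any disk on $\Pc_0$ is an arc in the cyclic order, so $\Hc(\Pc_0,\text{disks})$ is a sub-hypergraph of the arc hypergraph on $\Pc_0$. I would build a non-two-colorable $m$-uniform arc hypergraph by induction on $m$, with base case an odd-cycle type construction realized by points on a single circle. For the inductive step I would take several copies of the bad $(m-1)$-uniform arc hypergraph, interleave them along a common convex curve, and add length-$m$ bridging arcs that transport the parity obstruction between the copies. The main technical difficulty is geometric realisability: only certain arcs of the cyclic order are cut out by some Euclidean disk, so the points must be placed on a concrete convex curve (for instance a spiral of rapidly increasing curvature, or several carefully nested circular arcs) chosen so that precisely the arcs demanded by the combinatorial argument arise as disk-traces, while the extra arcs that would destroy the non-two-colorability do not.

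For stage two one cannot stay in the convex setting: on a convex point set the disk hypergraph is an arc hypergraph, and any $m$-uniform arc hypergraph with $m\ge 3$ is three-colored by the periodic pattern $1,2,3,1,2,3,\ldots$, hence is three-colorable. My plan is therefore to take a large outer copy $\Pc_0^{\mathrm{out}}$ of the convex two-chromatic construction with uniformity $m'\gg m$, and replace each outer point $p$ by a tiny scaled copy $\Pc_0^{(p)}$ placed near $p$; the full point set is no longer convex. Scales are chosen so that small disks see only a single cluster, realising its intrinsic hypergraph, while large disks pick up at most one point per cluster, realising edges of $\Pc_0^{\mathrm{out}}$. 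In any three-coloring, either some cluster uses only two colors, in which case the convex-position lemma applied inside that cluster produces a monochromatic $m$-disk, or every cluster uses all three colors, and a pigeonhole on the color profiles inside the clusters reduces to a two-coloring of a large sub-structure of $\Pc_0^{\mathrm{out}}$, to which the convex lemma again applies.

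I expect the convex-position lemma to be the main obstacle: the combinatorial plan (interleaving arc constructions) is clear in principle, but enforcing it geometrically — choosing points on a concrete convex curve so that exactly the right arcs are realized as disk-traces, and no ``ruining'' arcs appear — is where the real work lies. Once that lemma is in hand, the amplification to three colors in stage two is essentially a nesting and scale-tuning exercise driven by pigeonhole.
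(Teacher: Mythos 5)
Your stage one matches the paper in spirit: Theorem~\ref{thm:stabbed} (via Theorem~\ref{thm:construction}) realizes a non-two-colorable $m$-uniform hypergraph with points arbitrarily close to a prescribed circle, so the disk-edges are arcs of the cyclic order. You would use a different combinatorial base (an inductive ``interleaved arc'' construction) rather than the paper's rooted-tree hypergraph $\Hc(T)$, and your geometric concern is apt, though one worry is misplaced: a realization $(\Pc,\Dc)$ carries $\Dc$ as part of the data, so arcs you do not put into $\Dc$ do not become edges of $\Hc(\Pc,\Dc)$. You only need to produce the wanted arcs, not suppress unwanted ones.

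The genuine gap is in stage two. Your observation that a convex construction cannot itself be four-chromatic (the cyclic $1,2,3,1,2,3,\dots$ coloring kills every arc of length at least three) is exactly right and correctly forces non-convexity. But the per-vertex cluster replacement does not close the case analysis. In the branch where every cluster receives all three colors, there is nothing to pigeonhole --- every cluster has the same profile $\{1,2,3\}$ --- and you cannot steer a large disk to grab the color-one representative in each cluster it meets: which single cluster point a large disk captures is dictated by the geometry, not chosen by you. You would need the far stronger property that \emph{every} selection of one point per cluster along a large edge is realized by some disk, which your setup does not provide. The output is also not $m$-uniform (small disks see $m$ points, large disks see $m'\gg m$). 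The paper's amplification is structurally different and avoids all of this: it starts from the trivially non-three-colorable $\Gc_1$ (a single vertex with a singleton edge) and iteratively applies the extension $\Ac+_F\Bcal$ of Definition~\ref{def:extension}, replacing an edge $F$ by copies $F\cup\{q_i\}$, each carrying exactly one new vertex, with a non-two-colorable $\Hc^2(m)$ imposed on the $q_i$. Claim~\ref{clam:color} shows non-three-colorability survives: if the only monochromatic edge left is $F$, the new vertices must avoid $F$'s color, hence $\Hc^2(m)$ is two-colored and fires. Iterating $m$ times yields the $m$-uniform $\Hc^3(m)$. Geometrically (Lemma~\ref{lemma:extension}) all the new vertices sit in a single tiny cluster near a boundary point of $D_F$ uncovered by other disks --- this is precisely where the concyclic form of your stage one (Theorem~\ref{thm:construction}) is needed --- so the clusters attach to \emph{edges}, not to vertices.
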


The proof of Theorem \ref{thm:main} is based on two ideas. The first one is from \cite{abafree}, where colorings of so-called ABAB-free hypergraphs were considered. We will use a construction, based on one from \cite{abafree}, to create a point set that is not two-colorable with respect to disks and, furthermore, the points are close to a prescribed set of points that lie on a circle. Then using ideas from  \cite{MR4012917} and \cite{MR2364757}, we will combine several copies of this non-two-colorable construction to create point sets that are not three-colorable.
The non-two-colorable construction 
can be of independent interest.

\begin{theorem}\label{thm:stabbed}
For any $m$, there exists an $m$-uniform hypergraph that is not two-colorable and that permits a planar realization with disks that all contain some fixed point.\\
Moreover, in the realization $(\Pc, \Dc)$, the points $\Pc$ can be placed arbitrarily close to some given points on a circle such that the boundary of each disk from $\Dc$ is also arbitrarily close to this circle.
\end{theorem}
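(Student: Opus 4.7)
The plan is to reduce the theorem to a question about halfplane arrangements via inversion. Fix the prescribed circle $C$ and a point $q\in C$, and let $\iota$ denote the inversion centered at $q$. Then $\iota$ sends $C$ to a line $L$, sends every disk containing $q$ to an open halfplane, and sends disks whose boundary circle lies $\eps$-close to $C$ to halfplanes whose bounding line lies $\eps'$-close to $L$, with $\eps'\to 0$ as $\eps\to 0$; likewise, points close to $C$ are mapped to points close to $L$. It therefore suffices to construct, for every $m$, an $m$-uniform non-two-colorable hypergraph realized by points arbitrarily close to a line $L$ together with halfplanes bounded by lines arbitrarily close to $L$. Once this is done, applying $\iota^{-1}$ returns the desired disk realization on points arbitrarily close to the prescribed points on $C$, with every disk through $q$ and every disk boundary arbitrarily close to $C$.

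For the reduced problem I would invoke the non-two-colorable ABAB-free hypergraph $H$ supplied by \cite{abafree}. Recall that $H$ is supported on a linearly (or cyclically) ordered vertex set, with the property that no two edges $A,B$ contain four elements appearing in the order $a_1,b_1,a_2,b_2$ with $a_i\in A$ and $b_i\in B$. This is exactly the combinatorial signature of halfplane hypergraphs on nearly-collinear point sets, which is why $H$ is a natural candidate for geometric realization in the strip around $L$. Concretely, I would place the vertices of $H$ as points $p_1,\ldots,p_n$ along $L$ in the prescribed order, lift each $p_i$ by a tiny positive or negative vertical displacement, and for each edge $E\in H$ produce a line close to $L$ whose associated halfplane cuts out exactly $E$ from the $p_i$. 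The ABAB-free condition is precisely the obstruction-free condition for doing this simultaneously: whenever two edges $A,B$ do not interleave as ABAB along $L$, their cutting lines can be chosen without conflict.

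The main difficulty lies in this simultaneous realization step: one must design the up/down perturbations of the $p_i$ and the slopes and offsets of the cutting lines so that all edges are realized at once, and in such a way that the overall scale of these perturbations can be driven to zero without destroying any edge. I would attempt this inductively on the structure of $H$, processing edges in an order compatible with the vertex order and maintaining a consistent intermediate arrangement; the ABAB-free property should let each newly inserted cutting line be chosen within the existing arrangement. Once this halfplane realization is in place on a vanishingly small scale around $L$, inverting through $\iota^{-1}$ delivers Theorem~\ref{thm:stabbed}, since the hypergraph of the inverted configuration is isomorphic to $H$ and $H$ is $m$-uniform and not two-colorable.
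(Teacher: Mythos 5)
Your reduction to halfplanes cannot work, for two independent reasons.

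First, the inversion does not behave as you claim. In Theorem \ref{thm:stabbed} the fixed point lies in the \emph{interior} of every disk, and the boundary circles are merely $\eps$-close to $C$, not incident to any particular point of $C$. If $q\in C$ is your center of inversion and $D$ is an open disk with $q\in D^\circ$, then $\iota(\partial D)$ is again a circle (not a line, since $q\notin\partial D$), and $\iota(D)$ is the unbounded \emph{exterior} of that circle. Only circles passing exactly through $q$ invert to lines, and then $q$ is not interior to the disk. So your inverted configuration consists of points near a line together with complements of large disks, not halfplanes, and this is not the range space you need for the ABA/ABAB machinery you invoke.

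Second, and more fundamentally, even if you could land in the halfplane world, the target theorem would become false there. Hypergraphs realizable by halfplanes (in fact, by arbitrary pseudohalfplanes) are always proper two-colorable once each edge has at least three vertices; this is exactly the Keszegh--Pálvölgyi theorem quoted in Section~\ref{sec:4}, specialized to $k=2$. So no non-two-colorable $m$-uniform hypergraph is realizable by points near a line together with halfplanes whose boundaries are near that line, which is precisely what your reduced problem asks for. You have conflated two different conditions: the \emph{ABA-free} property characterizes pseudohalfplane hypergraphs (two-colorable), whereas the \emph{ABAB-free} property you actually write down (no interleaving $a_1,b_1,a_2,b_2$) characterizes stabbed \emph{pseudodisk} hypergraphs, which can be non-two-colorable. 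The whole point of the paper's argument is that one must stay in the world of disks near a full circle, where the cyclic structure (and the fact that a disk can carve out an arc between two crossings of $C$, as in Lemma~\ref{lemma:step}) is essential; the paper builds the realization directly by the iterative perturbation scheme in Theorem~\ref{thm:construction}, with no passage to a linear picture.
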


Previously, such a construction was only known for pseudo-disks containing a fixed point, and it was also shown that such hypergraphs are always three-colorable (already for $m=2$) \cite{MR4012917}.

Note that if we require the points of $\Pc$ to be placed close enough to the given points on the circle, then the points of $\Pc$ will be in convex position.

The construction of Theorem \ref{thm:main} can be generalized as follows.

\begin{theorem}\label{thm:gen_main}

 Let $C$ be any closed convex set in the plane, that has two parallel supporting lines such that $C$ is strictly convex in some neighbourhood of the two points of tangencies. For any $m$, there exists an $m$-uniform hypergraph that is not three-colorable and that permits a planar realization with homothets of $C$.
\end{theorem}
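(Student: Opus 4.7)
The plan is to mimic the proofs of Theorem~\ref{thm:main} and Theorem~\ref{thm:stabbed}, replacing disks by homothets of $C$. Let $p_1,p_2$ denote the two points of tangency, and let $\gamma_i$ be the strictly convex arc of $\partial C$ in a small neighbourhood of $p_i$. The key observation is that each $\gamma_i$ can locally play the role that the circle plays in the disk construction: points placed near a large homothet of $\gamma_i$ can be selected as contiguous subsets by the boundaries of nearby homothets of $C$, so the entire combinatorial apparatus of the disk proof can be transported.

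The first step is to generalize Theorem~\ref{thm:stabbed} to homothets of $C$. Fix a very large homothet $C_0 = \Lambda C$, and let $p_*$ be its tangency point corresponding to $p_1$. The heart of the matter is a \emph{flexibility lemma}: for any prescribed points $q_1,\dots,q_n$ placed on $\partial C_0$ arbitrarily close to $p_*$, and for any consecutive subset $q_i,\dots,q_j$ along that arc, there exists a homothet $C'$ of $C$ whose interior contains exactly $\{q_i,\dots,q_j\}$ among the $q_k$'s and whose boundary stays close to $\partial C_0$. This should follow from a degrees-of-freedom count: homothets of $C$ form a three-parameter family, requiring the boundary to pass near two prescribed points of $\{q_k\}$ costs two parameters, and the resulting one-parameter family of candidate homothets sweeps out all relevant local arcs because of strict convexity at $p_1$. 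With this lemma replacing the elementary circular-arc flexibility used in the disk proof, the construction of Theorem~\ref{thm:stabbed} produces an $m$-uniform non-two-colorable hypergraph realized by homothets of $C$ that all contain some fixed point.

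The second step is to combine several copies of this sub-construction into a non-three-colorable construction, following Theorem~\ref{thm:main}. Here the second tangency point $p_2$ becomes relevant: placing copies with their critical arcs facing ``outward'' toward both the $p_1$- and $p_2$-type tangencies lets us arrange that homothets realizing one copy do not create stray incidences with the points of the other copies. The strict convexity near $p_2$ supplies the flexibility on the opposite side of $C$ needed to glue the copies together exactly as the opposite arcs of a circle are used in the disk setting. The main obstacle throughout is the flexibility lemma: for disks it is routine Euclidean geometry, but for general $C$ it requires a careful continuity argument using strict convexity, together with explicit control over how close to $p_1$ (and $p_2$) the construction must be squeezed in order for the sweep of the one-parameter family to behave monotonically. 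Once this lemma is available, both Theorem~\ref{thm:stabbed} and the combination step of Theorem~\ref{thm:main} translate essentially verbatim.
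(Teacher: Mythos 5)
Your proposal matches the paper's (sketched) approach: generalize Lemma~\ref{lemma:step} to strictly convex arcs, carry out the non-two-colorable construction of Theorem~\ref{thm:construction} using a single tangency neighbourhood, and use the second tangency neighbourhood for the extension step of Lemma~\ref{lemma:extension}. The paper itself omits the detailed proof of Theorem~\ref{thm:gen_main}, and your sketch---including your observation that the flexibility lemma is the main technical obstacle and requires a continuity argument via strict convexity---covers essentially the same ground at roughly the same level of rigor.
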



To complement our results, we also show that no similar construction for unit disks exists.

\begin{theorem}\label{thm:unit}
For any $k$, any finite point set $\Pc$ can be $k$-colored such that any unit disk, that contains some fixed point $o\notin \Pc$ 
and $8k-7$ points from $\Pc$, will contain all $k$ colors.
\end{theorem}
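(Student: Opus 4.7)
The bound $8k-7 = 2(4k-3)-1$ strongly suggests a proof strategy based on pigeonhole together with the Smorodinsky--Yuditsky polychromatic coloring theorem for halfplanes: every finite point set admits a $k$-coloring such that every halfplane containing at least $4k-3$ points is polychromatic.

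My plan is to split $\Pc = \Pc_1 \sqcup \Pc_2$ by a suitable geometric criterion (for example, inside versus outside the unit disk $B(o,1)$, or above versus below a generic line through $o$), and to show that for each $i$ the restricted hypergraph on $\Pc_i$ induced by unit disks through $o$ is realizable by halfplanes (or, more weakly, by pseudo-halfplanes, which admit a polychromatic $k$-coloring with the same linear bound) on an auxiliary point set. Applying the polychromatic coloring theorem to each part yields $k$-colorings $\chi_i$ of $\Pc_i$, and the union $\chi=\chi_1\sqcup\chi_2$ is the coloring of $\Pc$. For any unit disk $D$ through $o$ with $|D\cap\Pc|\ge 8k-7$, pigeonhole gives $|D\cap\Pc_i|\ge \lceil(8k-7)/2\rceil=4k-3$ for some $i$, so the subedge $D\cap\Pc_i$ is polychromatic under $\chi_i$ and a fortiori under $\chi$.

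The main obstacle is verifying the (pseudo-)halfplane structure on each restricted hypergraph. A naive split by a line through $o$ does not directly work: the boundaries of two unit disks through $o$ can cross twice in the same halfplane. For instance, taking $c_1=(0.6,0.3)$ and $c_2=(-0.4,0.3)$, both unit disks $D_{c_1}, D_{c_2}$ contain the origin, yet the two intersection points of $\partial D_{c_1}$ and $\partial D_{c_2}$ both have positive $x$-coordinate, so the induced arcs in the right halfplane cross twice; the analogous obstruction can occur for the split by $\partial B(o,1)$. The right partition will therefore likely need to be combined with a geometric transformation such as inversion centered at $o$ (which sends unit circles passing near $o$ toward straight lines), or the paraboloid lift $p\mapsto(p,|p|^2)$ (under which unit disks correspond to a restricted family of halfspaces in $\mathbb{R}^3$), so that on each side of the partition the boundary curves become pseudolines and the polychromatic coloring theorem for halfplanes or pseudo-halfplanes applies.
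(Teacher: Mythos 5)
Your high-level framework is exactly the one the paper uses---partition the plane by rays through $o$, argue that within each region the boundary arcs of stabbed unit disks behave like pseudolines, apply a polychromatic coloring theorem for pseudohalfplanes to each part, and finish by pigeonhole. You have also correctly diagnosed the central obstruction: a two-part split by a single line through $o$ does not work, and your example with $c_1=(0.6,0.3)$, $c_2=(-0.4,0.3)$ is a valid witness (the radical axis is $x=0.1$, so both intersection points lie in the right halfplane). However, at that point the proposal stops: you do not actually produce the partition, and you instead speculate that an inversion or paraboloid lift might be needed. That leaves the key geometric step unproven, so the argument is incomplete.

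The resolution in the paper is more elementary than the transformations you suggest. Partition the plane into \emph{four} quarters by two perpendicular lines through $o$. The paper's Lemma~\ref{lemma:disk2pseudo} shows that the boundaries of any two stabbed unit disks $D_1,D_2$ meet at most once in each quarter: since $D_1$ and $D_2$ have equal radii, the arc of one boundary that lies inside the other is strictly less than a semicircle, so by the inscribed angle theorem every interior point $p$ of $D_1\cap D_2$ sees the two intersection points $x_1,x_2$ at an angle $\angle x_1px_2\ge 90^\circ$; in particular this holds for $p=o$, so $x_1$ and $x_2$ cannot both lie in the same $90^\circ$ quarter. Within a quarter the arcs therefore extend to pseudolines, and one applies the Keszegh--P\'alv\"olgyi pseudohalfplane theorem (threshold $2k-1$ per pseudohalfplane), not the Smorodinsky--Yuditsky bound $4k-3$ for halfplanes that you had in mind. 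The arithmetic is then $4(2k-2)+1=8k-7$, which happens to coincide numerically with your factoring $2(4k-3)-1$ but arises from a four-way, not two-way, split. So the missing idea is: use \emph{two} lines rather than one, and replace the analytic transformation you were reaching for with the elementary equal-radii/Thales argument.
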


Note that the condition $o\notin \Pc$ is in fact not required, we only state our results this way to emphasize that the common point $o$ need not be from $\Pc$.

It is known that without requiring a common point $o$, the statement does not hold \cite{unsplittable}.

By using the well-known equivalence of the hypergraphs defined by primal and dual range spaces of the translates of any set \cite{Pach86,surveycd}, we can conclude the following.

\begin{corollary}
For any $k$, any finite collection of unit disks $\Dc$ can be partitioned into $k$ parts such that any point of a fixed unit disk $D_0\notin\Dc$ that was covered by at least $8k-7$ members of $\Dc$ will be covered by all $k$ parts.
\end{corollary}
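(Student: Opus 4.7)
The plan is to obtain this corollary as a direct application of Theorem \ref{thm:unit} via the primal-dual equivalence referenced in the preceding sentence. The key property is the symmetry of unit disks: a point $p$ lies in the unit disk centered at $c$ if and only if $c$ lies in the unit disk of radius $1$ centered at $p$, since both conditions are $|p-c|\le 1$.

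Concretely, I would set up the dual configuration as follows. Let $\Pc=\{c_D: D\in\Dc\}$ denote the set of centers of the disks in $\Dc$, and let $o$ be the center of the fixed disk $D_0$. Since $D_0\notin\Dc$ and distinct unit disks have distinct centers, we have $o\notin\Pc$. For any point $p$ in the plane, write $B(p,1)$ for the closed unit disk centered at $p$. By the symmetry observation, the collection of disks in $\Dc$ covering $p$ corresponds bijectively to the points of $\Pc$ lying in $B(p,1)$; and the condition $p\in D_0$ translates to $o\in B(p,1)$.

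With this dictionary in place, I would apply Theorem \ref{thm:unit} to the point set $\Pc$ with fixed point $o\notin\Pc$, producing a $k$-coloring of $\Pc$, which through the identification of each disk with its center is equivalently a partition of $\Dc$ into $k$ parts. For any point $p\in D_0$ that is covered by at least $8k-7$ members of $\Dc$, the unit disk $B(p,1)$ contains both the fixed point $o$ and at least $8k-7$ points of $\Pc$; Theorem \ref{thm:unit} then guarantees that $B(p,1)$ contains points of every color class, which translates back to $p$ being covered by at least one disk from each of the $k$ parts.

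There is no real obstacle here; the only care needed is in verifying that the primal-dual dictionary correctly encodes both the ``fixed point / fixed disk'' condition and the ``many covers / many points'' condition for every $p\in D_0$. Both translations are immediate consequences of the central symmetry of the unit disk, which is exactly the setting of the standard translates equivalence invoked here.
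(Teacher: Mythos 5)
Your proof is correct and is exactly the argument the paper has in mind: the paper derives this corollary in one line by invoking the well-known primal/dual equivalence for translates, and what you have written is the explicit unpacking of that equivalence for unit disks (centers of disks become points, membership $|p-c|\le 1$ is symmetric in $p$ and $c$, and $p\in D_0$ becomes $o\in B(p,1)$). The only cosmetic point is that the paper takes disks to be open, so you should use open disks $|p-c|<1$ throughout, which actually makes the step ``$p\in D_0\Rightarrow o\in B(p,1)$'' cleaner; with closed disks a boundary point of $D_0$ would need the footnote's shrinking remark.
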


This statement is sharp in the sense that for a disk $D_0$ of larger radius it fails already for $k=2$, even if the members of $\Dc$ are required to be very close to each other; this follows from taking the dual of the construction from \cite{unsplittable}. 
The function $8k-7$ is unlikely to be sharp.\\

Theorem \ref{thm:unit} can be extended in two different ways. Firstly, instead of stabbed unit disks we can consider stabbed translates of a given convex set.

\begin{theorem}\label{thm:convex}
For any plane convex set $C$ and any integer $k$ the following holds. Any finite point set $\Pc$ can be $k$-colored such that any translate of $C$, that contains some fixed point $o\notin \Pc$ and $24k-23$ points from $\Pc$, will contain all $k$ colors. 
\end{theorem}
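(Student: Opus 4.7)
The plan is to reduce Theorem~\ref{thm:convex} to the unit disk case (Theorem~\ref{thm:unit}) by cutting each stabbed translate into three ``slices'' and invoking the unit disk result on each slice separately. The factor of three in the bound $24k - 23 = 3(8k-8) + 1$ is the pigeonhole loss incurred by such a three-way split.

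Fix three closed wedges $W_1, W_2, W_3$ of angular width $120^\circ$ with common apex $o$ whose union is the entire plane. After an infinitesimal rotation of the three boundary rays we may assume they miss $\Pc$, so $\Pc$ is partitioned as $\Pc = \Pc_1 \sqcup \Pc_2 \sqcup \Pc_3$ with $\Pc_i := \Pc \cap W_i$. For any translate $T = C + v$ containing $o$, write $T_i := T \cap W_i$; the three slices cover $T$, so if $|T \cap \Pc| \geq 24k - 23$, then by pigeonhole some $i$ satisfies $|T_i \cap \Pc_i| \geq 8k - 7$.

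The central claim is that for each fixed $i$, the family of slices $\{T \cap W_i : T \text{ translate of } C \text{ with } o \in T\}$ induces, on $\Pc_i$, the same hypergraph as some collection of unit disks each containing $o$. The intuition is that inside a $120^\circ$ wedge with apex $o$, the only portion of $\partial T$ that matters for defining the slice is a single convex arc; any two such arcs cross at most twice (since translates of the same convex set pairwise cross at most twice), and, together with the shared apex $o$ lying on $\partial W_i$, the slices form a stabbed pseudodisk family, which is known to be realizable by a family of stabbed unit disks with the same incidence pattern on $\Pc_i$. Granting this, Theorem~\ref{thm:unit} applied to each of the three induced hypergraphs on $\Pc_1, \Pc_2, \Pc_3$ produces $k$-colorings $c_1, c_2, c_3$; since the $\Pc_i$ are disjoint, they glue to a single $k$-coloring $c$ of $\Pc$. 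For any translate $T$ stabbed by $o$ with $|T \cap \Pc| \geq 24k-23$, the pigeonhole slice $T_i$ with at least $8k-7$ points is an edge of the $i$-th unit-disk hypergraph, hence by Theorem~\ref{thm:unit} it already contains all $k$ colors of $c_i$, and so does $T$.

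The main obstacle is justifying the ``unit-disk realization'' of the wedge-restricted slices. If Theorem~\ref{thm:unit} is used strictly as stated (for actual unit disks), one must construct, for each slice $T_i$, an explicit unit disk $D$ with $o \in D$ and $D \cap \Pc_i = T_i \cap \Pc_i$, which requires care for general convex $C$. The cleaner route is to observe that the proof of Theorem~\ref{thm:unit} likely uses only the stabbed-pseudodisk structure of unit disks passing through a common point, in which case the argument applies verbatim to each wedge-restricted family (no geometric realization step needed). In either variant, the bulk of the work lies in verifying that restricting translates of $C$ to a $120^\circ$ wedge with apex at the stabbing point preserves enough combinatorial structure (pseudodisk property plus shared boundary point) to feed into the proof machinery behind Theorem~\ref{thm:unit}.
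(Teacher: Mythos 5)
Your central reduction to Theorem~\ref{thm:unit} has a genuine gap at precisely the step you flag as ``the main obstacle,'' and neither variant you propose closes it. For a general convex set $C$, two translates both containing $o$ can have boundaries that cross \emph{twice} within a fixed $120^\circ$ wedge: this happens whenever $C$ is sufficiently elongated, since the angle $\angle x_1 p x_2$ (in the notation of Lemma~\ref{lemma:disk2pseudo}) can be made arbitrarily small. The paper points this out explicitly (``this angle might be arbitrarily small if the set is thin enough''). Consequently the boundary arcs inside a wedge need not form a pseudoline arrangement, so the machinery behind Theorem~\ref{thm:unit} --- which relies on boundaries of stabbed unit disks meeting at most once per quarter, feeding into the Keszegh--P\'alv\"olgyi pseudohalfplane theorem --- does not apply verbatim, and a fortiori the slices cannot be realized by stabbed unit disks. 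Separately, the claim that any stabbed pseudodisk family is realizable by stabbed unit disks with the same incidence structure on a finite point set is false: Theorem~\ref{thm:stabbed} shows that stabbed disks (a subfamily of stabbed pseudodisks) realize non-two-colorable $m$-uniform hypergraphs for every $m$, while Theorem~\ref{thm:unit} with $k=2$ shows stabbed unit disks cannot.

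The missing idea is an affine normalization. The paper first applies the affine map taking the John ellipsoid of $C$ to the unit disk; then $C$ is sandwiched between concentric disks of radii $1$ and $2$, which yields a universal lower bound of $\pi/6$ on the intersection angle. This permits a partition of the plane into $12$ cones of angle $30^\circ$ (not $3$ of $120^\circ$) in each of which the boundaries of stabbed translates genuinely behave as pseudolines; the pseudohalfplane theorem is then applied directly per cone to get $12(2k-2)+1 = 24k-23$. Your arithmetic $3(8k-8)+1 = 24k-23$ coincidentally produces the same bound because Theorem~\ref{thm:unit} itself hides a four-way split into quarters, but the pseudoline property per region is what must actually be established, and it requires the normalization you have not performed.
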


Secondly, it is also interesting to consider the case when our family contains stabbed homothetic copies of a given convex set. Theorem \ref{thm:stabbed} implies that the natural analogue of Theorem \ref{thm:unit} does not hold for homothetic copies of a disk. On the other hand we can show the following for polygons. 

\begin{theorem}\label{thm:polygon}
For any convex polygon $P$ there is a $t_P$ such that for any $k$ the following holds. Any finite point set $\Pc$ can be $k$-colored such that any homothetic copy of $P$, that contains some fixed point $o\notin \Pc$ and $t_P k$ points from $\Pc$, will contain all $k$ colors.
\end{theorem}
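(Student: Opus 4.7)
The plan is to reduce Theorem \ref{thm:polygon} to Theorem \ref{thm:convex} by exploiting the finite edge structure of the polygon $P$. Let $s$ be the number of edges of $P$, with outward unit normals $n_1, \ldots, n_s$. Every homothet $H$ of $P$ has edges with these same $s$ fixed normals, a combinatorial rigidity that is absent for the disk and consistent with Theorem \ref{thm:stabbed} blocking a disk analogue.

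The first step is to partition the family $\mathcal{R}$ of all stabbed homothets of $P$ into $s$ subfamilies $\mathcal{R}_1, \ldots, \mathcal{R}_s$. For each $H \in \mathcal{R}$, connecting $o$ to the $s$ vertices of $H$ divides $H$ into $s$ triangles $T_i^H = \mathrm{conv}(o, v_i^H, v_{i+1}^H)$, where the side of $T_i^H$ opposite $o$ lies along the edge $e_i^H$ of $H$ and therefore has the fixed direction perpendicular to $n_i$. I would classify $H$ into $\mathcal{R}_i$ by letting $i$ be an index maximizing $|T_i^H \cap \Pc|$ (ties broken arbitrarily); then by pigeonhole $|T_i^H \cap \Pc| \ge |H \cap \Pc|/s$ whenever $H \in \mathcal{R}_i$. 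For each fixed $i$, I would then argue that the hypergraph $\{T_i^H \cap \Pc : H \in \mathcal{R}_i\}$ is a sub-hypergraph of the stabbed-translates hypergraph of some convex set $C_i$ depending only on $P$ and $i$; a natural route is to refine $\mathcal{R}_i$ further according to the angular opening of $T_i^H$ at $o$ (which ranges over a finite set of combinatorial types determined by the vertex directions of $P$), and within each subclass to absorb the extra scaling degree of freedom into a translation using the fixed direction of the opposite edge.

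Finally, applying Theorem \ref{thm:convex} to each $C_i$ with parameter $k$ and combining the resulting colorings yields the required $k$-coloring, with $t_P = O(s)$. The main obstacle I expect lies in the second step: a homothet has one more degree of freedom than a translate, and absorbing this extra scale parameter into a translation genuinely requires the polygonal structure, which is exactly why the analogous argument must fail for disks by Theorem \ref{thm:stabbed}. A related technical difficulty is that Theorem \ref{thm:convex} produces a priori different colorings for different $C_i$, so merging them into a single $k$-coloring must be done with care; this could be handled either by choosing a single ``master'' convex set whose stabbed-translate hypergraph dominates the union of all $\mathcal{R}_i$, or by inspecting the proof of Theorem \ref{thm:convex} to verify that its output coloring can be chosen uniformly over the finite family $\{C_1, \ldots, C_s\}$.
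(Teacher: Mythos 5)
Your proposal takes a genuinely different route from the paper, and it has a gap in the key reduction step that I don't see how to close as stated.

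The paper does not reduce to Theorem~\ref{thm:convex}. Instead it mirrors the proof of Theorem~\ref{thm:unit} directly: set $\alpha_{ij}=\inf\{\angle v_iqv_j : q\in P\setminus\overline{v_iv_j}\}$, let $\alpha=\min\alpha_{ij}>0$, and partition the \emph{plane} (hence the \emph{point set}) into cones around $o$ of angle less than $\alpha$. Then any stabbed homothet has at most one vertex in each cone, and the crucial geometric lemma is that two stabbed homothets $P',P''$ meet at most once per cone: if they met twice, one finds a side $s$ of one of them such that the extremal points of $P'$ and $P''$ in the normal direction of $s$ would have to be a full edge for both (being homothets) yet a single vertex for one of them, a contradiction. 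This makes the boundaries pseudolines in each cone, so the Keszegh--P\'alv\"olgyi pseudohalfplane theorem applies per cone, and a pigeonhole over the (constantly many) cones gives $t_Pk$.

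The gap in your argument sits exactly where you predicted. Your triangles $T_i^H=\mathrm{conv}(o,v_i^H,v_{i+1}^H)$ all share the apex $o$, the opposite edge has a fixed normal, but the two rays from $o$ to $v_i^H$ and $v_{i+1}^H$ point in \emph{continuously varying} directions as $H$ ranges over all stabbed homothets (move $o$ from the center of $H$ toward one side and the aperture angle changes continuously). So these triangles are not translates of one another, and the claim that the angular opening ``ranges over a finite set of combinatorial types'' is not correct; it is a continuous parameter, and I don't see a finite refinement of $\mathcal{R}_i$ after which the family becomes a stabbed-translate family. There is also a second problem you flag yourself: you partition the \emph{range family} rather than the point set, so each application of Theorem~\ref{thm:convex} colors all of $\Pc$, and the $s$ resulting $k$-colorings have no reason to be mergeable into one $k$-coloring (the na\"ive product uses $k^s$ colors). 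The paper's pigeonhole works precisely because it partitions $\Pc$ into disjoint cone-pieces and colors each piece independently. If you want to salvage your reduction, you would first need the cone decomposition of $\Pc$ anyway, at which point the detour through Theorem~\ref{thm:convex} is no longer buying you anything and you are back to the paper's direct pseudoline argument.
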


The rest of the paper is organized as follows.
In Section \ref{sec:2} we prove Theorem \ref{thm:stabbed}, then building on this construction, in Section \ref{sec:3} we prove Theorem \ref{thm:main}.
In Section \ref{sec:4} we prove Theorem \ref{thm:unit}.
The sketch of the proofs of the generalizations for other shapes (not disks), can be found in Section \ref{sec:5}.
Finally, in Section \ref{sec:6} we highlight the most important problems left open.
We end this introduction with a bit more history, and a basic observation.

\subsubsection*{Related results}
Pach proved in 1986 that any sufficiently thick covering of the plane
\cite{Pach86} by the translates of a centrally symmetric open convex polygon can be partitioned into two disjoint coverings.
The proof followed from showing that for every such polygon $P$ there is an $m(P)$ for which any hypergraph, whose edges contain at least $m(P)$ vertices and can be realized with translates of $P$, is two-colorable.
Several results followed, eventually showing that the similar statement is true for the translates of all convex polygons \cite{MR2812512,PT10}, while counterexamples were given for non-convex polygons \cite{MR2364757,MR2679054} and convex shapes with a smooth boundary \cite{unsplittable}.
We know much less when instead of translates, homothetic (i.e., scaled and translated) copies are considered \cite{homotsquare,MR3151767,MR3216669,kovacs} or about the problem of decomposing into multiple coverings/polychromatic colorings \cite{MR3126347,MR2812512,MR2844088}.
For a summary of the most closely related results, see Table \ref{table}; for more results, see the decade-old survey \cite{surveycd} or the webpage \url{https://coge.elte.hu/cogezoo.html}.

\begin{table}[]
    \begin{center}
		\begin{tabular}{|l|l|l|}
			\hline
			& \textbf{translates} & \textbf{homothets} \\ \hline
			
			\textbf{triangles} & \begin{tabular}[c]{@{}l@{}}$\chi_{fat}=2$ \cite{TT07}\\$m_k=O(k)$ \cite{MR2812512}\end{tabular} &

			\begin{tabular}[c]{@{}l@{}}$\chi_{fat}=2$ \cite{octants}\\$m_k=O(k^{4.09})$ \cite{MR3151767} + \cite{moreoctants}\end{tabular}  \\ \hline
			
			\begin{tabular}[c]{@{}l@{}}
				\textbf{convex}\\ \textbf{polygons}
			\end{tabular} & \begin{tabular}[c]{@{}l@{}}$\chi_{fat}=2$ \cite{PT10}\\$m_k=O(k)$ \cite{MR2812512} \end{tabular} &
			\begin{tabular}[c]{@{}l@{}}
			$2\le \chi_{fat}\le 3$ \cite{3propercol}\\
			$\chi_{fat}=2$ for squares \cite{homotsquare}
		    \end{tabular}  \\ \hline
			
			\begin{tabular}[c]{@{}l@{}}
				\textbf{non-convex}\\ \textbf{polygons$^*$}
			\end{tabular} & \begin{tabular}[c]{@{}l@{}}$3\le \chi_{fat}$ \cite{MR2679054}\end{tabular} & \begin{tabular}[c]{@{}l@{}}$3\le \chi_{fat}$ \cite{MR2679054}\end{tabular} \\ \hline
		
			\begin{tabular}[c]{@{}l@{}}
				\textbf{stabbed}\\ \textbf{convex}\\ \textbf{polygons}
			\end{tabular} & \begin{tabular}[c]{@{}l@{}}$\chi_{fat}=2$ \cite{PT10}\\$m_k=O(k)$ \cite{MR2812512}\end{tabular} & \begin{tabular}[c]{@{}l@{}}$\chi_{fat}=2$ [NEW]\\$m_k=O(k)$ [NEW]\end{tabular} \\ \hline
		
			\begin{tabular}[c]{@{}l@{}}
				\textbf{stabbed}\\ \textbf{disks}
			\end{tabular} & \begin{tabular}[c]{@{}l@{}}$\chi_{fat}=2$ [NEW]\\$m_k=O(k)$ [NEW]\end{tabular} & \begin{tabular}[c]{@{}l@{}}$\chi_{fat}\le 3$ \cite{MR4012917}\\ $\chi_{fat}= 3$
			[NEW]\end{tabular} \\ \hline
		
			\textbf{disks} & \begin{tabular}[c]{@{}l@{}}$3\le \chi_{fat}\le 4$ \cite{unsplittable}\end{tabular} & \begin{tabular}[c]{@{}l@{}}$\chi_{fat}=4$ [NEW] \end{tabular} \\ \hline
			\end{tabular}
			\caption{Summary of old and new results related to the topic of this paper. For a family, $\chi_{fat}$ denotes the smallest number $k$ for which there is an $m$ such that any finite set of points can be $k$-colored such that any member of the family with at least $m$ points will contain at least two colors. For $k$ colors, $m_k$ denotes the smallest number $m$ such that any finite set of points can be $k$-colored such that any member of the family with at least $m$ points will contain all $k$ colors. Results from this paper are marked with `[NEW]'.
			\newline $^*$ There are some very special non-convex polygons for which $\chi_{fat}=2$---for the complete classification, see \cite{PT10} or \cite{surveycd}.}\label{table}
    \end{center}		
\end{table}

The above papers mainly focused on two- or polychromatic colorability.
Proper three-colorability of geometric hypergraphs was studied in detail in \cite{wcf2} and \cite{3propercol}.
In the latter paper it was shown that for every convex polygon $P$ there is an $m(P)$ such that every $m(P)$-uniform hypergraph realizable by homothetic copies of $P$ is proper three-colorable.
Our results imply that this is not the case for disks, disproving a conjecture from both of the above papers.\\

In this paper all disks are assumed to be open.\footnote{But note that our results also hold for closed disks, as we could slightly shrink any finite system of open disks to contain the same points of a finite point set.} Let $\Pc$ be a point set and let $\Dc$ be a family of disks.  An important folklore observation that we will use many times is that small perturbations of the points and the disks will not change the hypergraph $\Hc(\Pc,\Dc)$. To put this into more precise terms, we will say that two points are \emph{$\eps$-close} if their distance is less than $\eps$ and two disks/circles are \emph{$\eps$-close} if their centers are $\eps$-close and the difference of their radius is also smaller than $\eps$.
    
\begin{obs}\label{obs:pert}
    Suppose we have a finite point set $\Pc$ and a finite set of open disks $\Dc$ such that none of the points lie on the boundary of any of the disks. Then there is an $\eps$ such that replacing each disk with any $\eps$-close disk and each point with any $\eps$-close point will not change the hypergraph $\Hc(\Pc,\Dc)$.    
\end{obs}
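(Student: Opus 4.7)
The plan is a standard perturbation/continuity argument based on the observation that whether a point $p$ lies inside or outside a disk $D$ with center $c_D$ and radius $r_D$ is determined by the sign of the quantity $\|p-c_D\|-r_D$. By the assumption that no point lies on any boundary, this quantity is nonzero for every pair $(p,D)\in\Pc\times\Dc$, so it suffices to show that small perturbations cannot change its sign.

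Quantitatively, I would show that if $p'$ is $\eps$-close to $p$ and $D'$ has center $c'$ that is $\eps$-close to $c_D$ and radius $r'$ with $|r'-r_D|<\eps$, then
\[
\bigl|(\|p'-c'\|-r')-(\|p-c_D\|-r_D)\bigr|<3\eps,
\]
which follows immediately from two applications of the triangle inequality: $\bigl|\|p'-c'\|-\|p-c_D\|\bigr|\le \|p'-p\|+\|c'-c_D\|<2\eps$, combined with $|r'-r_D|<\eps$.

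Having established this, I would define
\[
\delta\;=\;\min_{(p,D)\in\Pc\times\Dc}\bigl|\|p-c_D\|-r_D\bigr|,
\]
which is strictly positive since $\Pc$ and $\Dc$ are finite and every term is nonzero by hypothesis, and then choose any $\eps<\delta/3$. For such $\eps$, the displayed bound guarantees that $\|p'-c'\|-r'$ has the same sign as $\|p-c_D\|-r_D$ for every pair, so $p'\in D'$ if and only if $p\in D$. Hence the incidence structure, and therefore the hypergraph $\Hc(\Pc,\Dc)$, is unchanged.

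There is no real obstacle here; the only thing to be careful about is to make sure the perturbation bound accounts simultaneously for moving the point, moving the center, and changing the radius, which the factor $3$ in the denominator absorbs. Since the disks are open, the strict inequality $\eps<\delta/3$ (rather than $\le$) is what is needed to preserve the open-disk incidence relation.
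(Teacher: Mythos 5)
Your proof is correct. The paper states this observation as folklore without supplying a proof, and your argument is the natural justification: the signed quantity $\|p-c_D\|-r_D$ is nonzero on the finite set $\Pc\times\Dc$, it varies by less than $3\eps$ under an $\eps$-perturbation by the triangle inequality, and choosing $\eps<\delta/3$ preserves all signs and hence all incidences.
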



\section{A point set that is not two-colorable}\label{sec:2}

Here we prove Theorem \ref{thm:stabbed} by realizing for any $m$ a non-two-colorable $m$-uniform hypergraph with disks that all contain some fixed point.

Our main lemma is the following. Combined with Observation \ref{obs:pert}, this gives us a way to perturb the points of $\Pc$ with changing only a small part of the hypergraph $\Hc(\Pc,\Dc)$.

\begin{lemma}\label{lemma:step}
 If $\eps >0$, $C$ is a circle, and  $a,b_1,\dots, b_t, c$ are points on $C$ in this order, then there is a circle $C'$ and points $b'_1,\dots, b'_t$ on $C'$ with the following properties.
 \begin{enumerate}
     \item $C'$ is $\eps$-close to $C$.
     \item $b'_i$ is $\eps$-close to $b_i$ for each $i\in[t]$.
     \item $C'$ intersects $C$ between $a$ and $b_1$, and between $b_t$ and $c$.
     \item Each $b'_i$ is outside of $C$.
 \end{enumerate}
\end{lemma}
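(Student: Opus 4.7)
The plan is to build $C'$ as a circle through two chosen points $p,q\in C$, with $p$ on the open arc from $a$ to $b_1$ (not containing the other $b_i$'s) and $q$ on the open arc from $b_t$ to $c$ (not containing the other $b_i$'s). Any circle through $p$ and $q$ distinct from $C$ meets $C$ in exactly these two points, so Condition~3 will be built into the choice of $p$ and $q$. The remaining tasks are to choose $C'$ close enough to $C$ (Condition~1), to ensure that the arc of $C'$ from $p$ to $q$ on the $b_i$-side of the chord $pq$ lies strictly outside $C$, and then to locate each $b'_i$ on that outside arc close to $b_i$ (Conditions~2 and~4).

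The circles through $p$ and $q$ form a one-parameter family whose centres trace the perpendicular bisector $\ell$ of $pq$, and $\ell$ passes through $O$, the centre of $C$. Let $\hat n$ be the unit vector along $\ell$ pointing from $O$ toward the arc of $C$ containing the $b_i$'s. For small $\delta>0$, let $C'$ be the circle through $p$ and $q$ with centre $O'=O+\delta\hat n$; its radius $r'=|O'-p|$ depends continuously on $\delta$ and equals $r$ at $\delta=0$, so Condition~1 holds provided $\delta$ is small enough.

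The crux is to verify that the $b_i$-side arc of $C'$ lies outside $C$. Since $C\neq C'$ share only the points $p$ and $q$, one of the two arcs of $C'$ between them is entirely inside $C$ and the other entirely outside, so it suffices to exhibit one point of the $b_i$-side arc of $C'$ lying outside $C$. The natural witness is the apex $O'+r'\hat n$, which is the $\hat n$-maximal point of $C'$ and hence lies on the $b_i$-side of the chord. In local coordinates where $pq$ lies on the $x$-axis with midpoint at the origin, $|pq|=2d$, $O=(0,h)$ and $\hat n=(0,1)$, the apex of $C'$ has $y$-coordinate $h+\delta+\sqrt{d^2+(h+\delta)^2}$. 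Differentiating in $\delta$ shows this quantity is strictly increasing, so for $\delta>0$ the apex of $C'$ sits strictly above the apex of $C$ and is therefore outside $C$.

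Finally, define $b'_i$ as the intersection of the ray from $O'$ through $b_i$ with the $b_i$-side arc of $C'$. Each $b'_i$ varies continuously with $\delta$ and equals $b_i$ at $\delta=0$, so Condition~2 is met for $\delta$ small enough. Because the $b_i$'s lie in the interior of the $b_i$-arc of $C$, bounded away from $p$ and $q$, the radial projections $b'_i$ remain in the interior of the $b_i$-side arc of $C'$ and are therefore outside $C$, giving Condition~4. I expect the only genuinely delicate step to be the monotonicity argument that identifies which direction of shift makes the correct arc bulge outward; everything else reduces to routine continuity in $\delta$.
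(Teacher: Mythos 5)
Your proof is correct and follows essentially the same strategy as the paper: pick two anchor points on the arcs flanking the $b_i$'s, take $C'$ to be the circle through them whose center lies on their perpendicular bisector near the center of $C$, and send each $b_i$ to $C'$ by central projection from the new center. The paper states the conclusion as "clearly" holding for a small enough shift; you supply the small extra details (the choice of shift direction $\hat n$, the monotonicity computation for the apex, and the continuity argument placing each $b'_i$ on the outer arc) that justify that claim.
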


\begin{figure}[!ht]
    \centering
    \scalebox{0.8}{\definecolor{uuuuuu}{rgb}{0.26666666666666666,0.26666666666666666,0.26666666666666666}
\begin{tikzpicture}[line cap=round,line join=round,>=triangle 45,x=1.0cm,y=1.0cm,scale=1.6]
\clip(0.563065315586993,-6.0) rectangle (9.606509698847772,-2.1845350799150536);
\fill [line width=0.0pt,fill=black,fill opacity=0.20000000298023224] (3.7082361104981363,-3.214323567210344) circle (0.402885346005847cm);
\fill [line width=0.0pt,fill=black,fill opacity=0.20000000298023224] (4.7401353502623875,-3.0084501301102664) circle (0.46277116788104733cm);
\fill [line width=0.0pt,fill=black,fill opacity=0.20000000298023224] (5.970142500145331,-3.1194299994186725) circle (0.46179720183543277cm);
\draw [line width=1.5pt] (5.,-7.) circle (4.cm);
\draw [line width=1.5pt] (4.999529754525854,-5.678344770003286) circle (2.9278067948682214cm);
\begin{normalsize}
\draw (2.228201319308865,-3.4611419917762163) node[anchor=north west] {$a$};
\draw (2.6362205930300467,-3.1200638092178905) node[anchor=north west] {$A$};
\draw (7.00457042047425,-3.149299082008604) node[anchor=north west] {$B$};
\draw (7.4405504216373345,-3.4221616280552647) node[anchor=north west] {$c$};
\draw (3.515827867449079,-3.2564950822412206) node[anchor=north west] {$b_1$};
\draw (4.380591869542632,-3.012867808985274) node[anchor=north west] {$b_2$};
\draw (5.921080963729508,-3.1603187182876524) node[anchor=north west] {$b_3$};
\draw (3.447612230937414,-2.3891819894500492) node[anchor=north west] {$b_1'$};
\draw (4.578042778845011,-2.0968292615429127) node[anchor=north west] {$b_2'$};
\draw (6.115218055124979,-2.411221262008146) node[anchor=north west] {$b_3'$};
\draw (7.34,-4.630552903404762) node[anchor=north west] {$C'$};
\draw (8.44684387814946,-4.464886357590718) node[anchor=north west] {$C$};
\draw [line width=1.5pt] (4.999529754525854,-5.678344770003286)-- (3.640498325065157,-3.0850676537246313);
\draw [line width=1.5pt] (4.999529754525854,-5.678344770003286)-- (4.716410881102014,-2.7642589216474525);
\draw [line width=1.5pt] (4.999529754525854,-5.678344770003286)-- (6.037879386681707,-2.9408490769512707);
\draw [line width=1.5pt,dash pattern=on 3pt off 3pt] (4.998488039730598,-2.7505381604563346)-- (5.,-7.);
\draw (5.011473689310475,-5.629650359684269) node[anchor=north west] {$O$};
\draw [fill=black] (5.,-7.) circle (2.0pt);
\draw [fill=black] (2.5712307984019978,-3.821780346582539) circle (2.0pt);
\draw [fill=white] (3.0106220029263135,-3.529787443864701) circle (1.0pt);
\draw [fill=black] (3.7082361104981363,-3.214323567210344) circle (2.0pt);
\draw [fill=black] (4.7401353502623875,-3.0084501301102664) circle (2.0pt);
\draw [fill=black] (5.970142500145331,-3.1194299994186725) circle (2.0pt);
\draw [fill=white] (6.986908087591941,-3.528372679641184) circle (1.0pt);
\draw [fill=black] (7.412948270683936,-3.809752259932874) circle (2.0pt);
\draw [fill=black] (4.999529754525854,-5.678344770003286) circle (2.0pt);
\draw [fill=black] (3.640498325065157,-3.0850676537246313) circle (2.0pt);
\draw [fill=black] (4.716410881102014,-2.7642589216474525) circle (2.0pt);
\draw [fill=black] (6.037879386681707,-2.9408490769512707) circle (2.0pt);
\end{normalsize}
\end{tikzpicture}}
    \caption{Lemma \ref{lemma:step}.}
    \label{fig:step}
\end{figure}

\begin{proof}
Choose points $A$ and $B$ between between $a$ and $b_1$, and between $b_k$ and $c$, respectively (see Figure \ref{fig:step}). Choose $O$ on the perpendicular bisector of $AB$, close to the center of $C$. $C'$ will be the circle centered at $O$ passing through $A$ and $B$. Project $b_1,\dots, b_t$ onto $C'$ using $O$ as center. If $O$ is close enough to the center of $C$, this will clearly satisfy the requirements.    
\end{proof}

\subsection{Hypergraphs based on rooted trees}

The following hypergraph construction was used in \cite{MR2364757} to create several counterexamples for coloring problems.  

\begin{defi}
 For any rooted tree $T$, let $\Hc(T)$ denote the hypergraph on vertex set $V(T)$, whose hyperedges are all sets of the following two types.
 
 \begin{enumerate}
     \item Sibling hyperedges: for each vertex $v\in V(T)$ that is not a leaf, take the set
$S(v)$ of all children of $v$.
      \item Descendent hyperedges: for each leaf $v\in V(T)$, take all vertices along the unique path $Q(v)$ from the root to $v$.
 \end{enumerate}
\end{defi}

It is easy to see that $\Hc(T)$ is not two-colorable for any $T$. Either there is a monochromatic sibling edge, or we can follow the color of the root down to a leaf, finding a monochromatic descendent edge.  We can create an $m$-uniform hypergraph by choosing $T$ to be the complete $m$-ary tree of depth $m$. The non-two-colorable construction of Pach, Tardos and Tóth is also based on these hypergraphs.

\begin{theorem}[Pach, Tardos and Tóth \cite{MR2364757}]\label{thm:ptt}
 For any rooted tree $T$, the hypergraph $\Hc(T)$ permits a planar realization $(\Pc, \Dc)$ with disks in general position such that every disk $D\in \Dc$ has
a point on its boundary that does not belong to the closure of any other disk $D'\in \Dc$. \end{theorem}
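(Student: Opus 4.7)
I would prove this by induction on the number of vertices of $T$. The base case is a single vertex, handled by placing a single point together with a small disk around it (for the trivial descendent edge consisting of the root alone). For the inductive step, let $r$ be the root of $T$ with children $v_1, \dots, v_k$ and corresponding subtrees $T_1, \dots, T_k$; by induction, each $\Hc(T_i)$ admits a realization $(\Pc_i, \Dc_i)$ with the stated properties, and the goal is to combine these into a realization of $\Hc(T)$.

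The plan is to fix a point for $r$ and locations $v_1^*, \dots, v_k^*$ for the images of the $T_i$-roots, with $r, v_1^*, \dots, v_k^*$ placed on a common circle $\Gamma$, and with $r$ separated from the $v_i^*$'s by a chord. I would apply a similarity to each $(\Pc_i, \Dc_i)$ so that the image of the root of $T_i$ lies exactly at $v_i^*$ and the whole subrealization is contained in a tiny disk $B_i$ around $v_i^*$, with the $B_i$'s pairwise disjoint and disjoint from $r$. By Observation \ref{obs:pert}, if each $B_i$ is chosen small enough, this transformation preserves the hypergraph $\Hc(\Pc_i, \Dc_i)$, and the disks from different subrealizations do not interfere with each other or with $r$. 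The only missing edges of $\Hc(T)$ are then the sibling edge $\{v_1, \dots, v_k\}$ at $r$ and, for every descendent edge $Q$ in some $T_i$, the extended path $\{r\}\cup Q$.

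For the sibling edge at $r$, I would arrange each cluster so that $v_i^*$ is the point of $\Pc_i$ closest to $r$; then a disk $D_r$ whose boundary is close to $\Gamma$ on the $v_i^*$-side, obtained by applying Lemma \ref{lemma:step}, contains exactly $v_1^*, \dots, v_k^*$ from $\Pc_1 \cup \dots \cup \Pc_k$ and excludes $r$. For each descendent edge $Q$ of $T_i$, the disk $D_Q \in \Dc_i$ cuts out a specific subset of $\Pc_i$ inside the tiny cluster $B_i$; because $B_i$ is so small, that cut can equivalently be realized by a halfplane through $v_i^*$ in the direction prescribed by $D_Q$. I would then replace $D_Q$ by a very large disk $D'_Q$ passing through $r$ whose boundary near $v_i^*$ approximates this same halfplane, so that $D'_Q\cap B_i$ contains exactly the same points of $\Pc_i$ as $D_Q$ did, while also picking up the extra point $r$.

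The main obstacle is to ensure that the enlarged disk $D'_Q$ does not sweep over any of the other clusters $B_j$ with $j\ne i$, or include $r$ in the wrong disks, or violate general position. I would address the first point by placing the $v_i^*$'s on a short arc of $\Gamma$ with $r$ near the diametrically opposite point, so that every large disk joining a neighborhood of $v_i^*$ to $r$ stays inside a thin lens-shaped region between $v_i^*$ and $r$, disjoint from the other $B_j$'s. General position and the free-boundary-point property for $D_r$ and for every $D'_Q$ are then obtained by tiny generic perturbations of the newly added disks and of the points $v_i^*$, which is legitimate by Observation \ref{obs:pert} since none of these perturbations change the combinatorial incidences inside the subrealizations.
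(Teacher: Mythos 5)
The paper does not prove Theorem~\ref{thm:ptt}; it cites it from Pach, Tardos and T\'oth~\cite{MR2364757} and instead proves the variant Theorem~\ref{thm:construction}, whose realization sits near a prescribed circle but deliberately gives up the free-boundary-point property. So there is no in-paper proof to compare against, and I evaluate your argument on its own. The recursion on subtrees is a natural starting point, but the step that extends a descendent hyperedge of $T_i$ to also include $r$ has a gap. You claim that ``because $B_i$ is so small, that cut can equivalently be realized by a halfplane through $v_i^*$.'' This is not true: the cluster $B_i$ was obtained by applying a \emph{similarity} to the entire subrealization $(\Pc_i,\Dc_i)$, so each descendent disk $D_Q$ shrinks by the same factor as $B_i$ and its boundary is still a full circular arc at the scale of the cluster, not an approximately straight line. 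The set $D_Q\cap\Pc_i$ is in general a genuine disk cut of the tiny cluster, and there is no reason it coincides with a halfplane cut through $v_i^*$ or through any other point. Consequently the replacement of $D_Q$ by a large disk $D'_Q$ reaching out to $r$ is not justified, and the hyperedges $\{r\}\cup Q$ need not be realized.

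To repair this you would need a substantially stronger induction hypothesis, e.g.\ that the subrealization can be produced so that every descendent hyperedge of $T_i$ is simultaneously realizable by a halfplane, with the containing side always facing a prescribed exterior direction (so that the big disk can curve around to contain $r$ on the same side). That is a nontrivial additional claim that would itself need proof. A related unaddressed point is your disk $D_r$ for the new sibling edge $\{v_1,\dots,v_k\}$: choosing $v_i^*$ to be the point of $\Pc_i$ closest to $r$ does not guarantee that a disk whose boundary hugs $\Gamma$ picks up $v_i^*$ and no other point of $\Pc_i$, since those other points all sit in the tiny ball $B_i$ around $v_i^*$ and could lie on either side of $\Gamma$. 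Both issues could plausibly be fixed by building more structure into the inductive invariant, but as written the proof has holes. (The sequential argument the paper uses for Theorem~\ref{thm:construction}, peeling points off a circle one sibling group at a time via Lemma~\ref{lemma:step} in a siblings-first order, is one way to sidestep exactly these difficulties.)
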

 
 In order to be able to later build a point set that is not three-colorable, we will first extend Theorem \ref{thm:ptt} by showing that we can require the points to be close to a prescribed set of concyclic points, and require the disks to be close to the circle that contains the prescribed points. (We loose the property that every disk $D\in \Dc$ has a point on its boundary that does not belong to the closure of any other disk $D'\in \Dc$, so strict speaking Theorem \ref{thm:construction} is not a generalization of Theorem \ref{thm:ptt}.)  
 
 \begin{theorem}\label{thm:construction}
 If $\gamma>0$, $C$ is a circle  and $q_1,q_2,\dots, q_{n}$ are distinct points on $C$, then for any rooted tree $T$ on $n$ vertices, the hypergraph $\Hc(T)$ permits a planar realization $(\Pc, \Dc)$ with disks such that 
 \begin{enumerate}
 \renewcommand{\labelenumi}{(\Roman{enumi})}
    \item  $\Pc=\{p_1,\dots, p_n\}$, and each $p_i$ is $\gamma$-close to $q_i$. 
    \item  Every $D\in \Dc$ is $\gamma$-close to $C$.
 \end{enumerate}
  \end{theorem}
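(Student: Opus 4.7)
The plan is to prove Theorem \ref{thm:construction} by induction on $n = |V(T)|$. The base case $n = 1$ is immediate: place $p_1$ slightly inside $C$ near $q_1$, and take $\Dc = \{D\}$ for any disk $D$ that is $\gamma$-close to $C$ and contains $p_1$ (this realizes the only edge $Q(r) = \{r\}$).

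For the inductive step, let $r$ be the root of $T$ with children $v_1, \ldots, v_k$ and subtrees $T_1, \ldots, T_k$. I would label the vertices so that $r$ corresponds to $q_1$ and each $T_i$ occupies a consecutive arc of the remaining $q$-points around $C$, with $v_i$ (the root of $T_i$) assigned to the position nearest $q_1$. Apply Lemma \ref{lemma:step} with $a, c$ chosen on $C$ on either side of $q_1$ and $b_j = q_{j+1}$ for $j = 1, \ldots, n-1$, to obtain a circle $C'$ that is $\eps$-close to $C$ and points $q_2', \ldots, q_n'$ on $C'$ close to the originals but lying outside $C$. Place $p_1$ near $q_1$ just inside $C$, shallow enough that it also lies outside $C'$ (possible because $C'$ lies strictly inside $C$ on the $q_1$-side, away from the bulge). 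Apply the inductive hypothesis to each subtree $T_i$ on $C'$ with prescribed points $q'_{a_i}, \ldots, q'_{a_i+n_i-1}$ and a sufficiently small parameter $\gamma_i$, yielding realizations $(\Pc_i, \Dc_i)$ of $\Hc(T_i)$.

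Assemble $\Pc = \{p_1\} \cup \bigcup_i \Pc_i$ and build $\Dc$ in three groups: (a) every sibling-edge disk in each $\Dc_i$ is included unchanged; (b) every descendant-edge disk $D \in \Dc_i$ realizing $Q_{T_i}(\ell)$ is replaced by a nearby disk $D^*$ which also contains $p_1$, so that $D^* \cap \Pc = \{r\} \cup Q_{T_i}(\ell) = Q_T(\ell)$; (c) one new disk $D_0$ is added to realize $S(r) = \{v_1, \ldots, v_k\}$. The sibling disks in (a) are automatically correct because each is close to $C'$ and confined to the $T_i$-arc, while $p_1$ lies on the far side of $C'$ and the other $\Pc_j$'s lie in disjoint arcs; choosing $\gamma_i$ and $\eps$ small ensures this localization. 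For the modification in (b), I would pick $D^*$ as a disk close to $C$ with center shifted toward the $T_i$-arc by more than $\eps$ (so its bulge covers the descendant points that lie just outside $C$) but only slightly more (so it does not cover the other $T_j$-arcs, $j\neq i$); since $p_1$ is placed inside $C$ by less than this shift, $D^*$ contains $p_1$ as well.

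The main obstacle is (c): the $v_i$'s are $k$ scattered points on $C'$, one at the start of each $T_i$-arc, and the other vertices of each $\Pc_i$ lie in the same arcs, so no disk close to $C$ separates them without further preparation. I would address this by strengthening the inductive hypothesis so that at each recursive call the non-root vertices of $T_i$ are pushed further outward onto a circle $C''_i$ strictly outside $C'$ within the $T_i$-arc (via a second, inner application of Lemma \ref{lemma:step} before the recursion descends). Then every $v_i$ remains on $C'$ while all other $\Pc_i$-points lie radially outside $C'$ by some $\delta_i > 0$, and $D_0$ can be chosen close to $C$ with center shifted toward the bulge by an amount strictly between $\eps$ and $\eps + \delta_i$, so that the $v_i$'s are inside $D_0$, the other $\Pc_i$-points on $C''_i$ remain outside, and $p_1$ (on the opposite side) is excluded by the shift. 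The technical heart of the proof is coordinating the nested scales $\eps \gg \delta_i \gg \gamma_i$ at each recursion depth so that everything fits together and the final realization stays within the prescribed $\gamma$-closeness bound, which is then handled via Observation \ref{obs:pert}.
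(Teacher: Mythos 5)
Your induction assigns the root to $q_1$ and then each subtree $T_i$ to a consecutive block of the remaining positions, i.e.\ a depth-first preorder. Under this assignment the sibling set $S(r)=\{v_1,\dots,v_k\}$ is \emph{not} a consecutive arc: between $v_i$ and $v_{i+1}$ sit all the other vertices of $T_i$. This is exactly why you are forced into the delicate $D_0$ argument, and that step does not close. Your fix---a ``strengthened inductive hypothesis'' placing $v_i$ on $C'$ and the rest of $T_i$ on a second circle $C''_i$---is not something you can get from Theorem~\ref{thm:construction} itself, since that statement prescribes all of a subtree's points on a \emph{single} circle; you would need to formulate and prove a genuinely stronger statement to induct on, and it is not clear one exists along these lines, because in a deeper recursion the $C''_i$ layer would again split into further layers while all of them must still be swept by a single disk $D_0$ close to $C$. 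The paper avoids this problem entirely by assigning vertices in a \emph{siblings first order} (siblings of $v$ form a consecutive block, followed by $\Des(r_k),\dots,\Des(r_1)$), so that every sibling set is a consecutive arc and Lemma~\ref{lemma:step} realizes each sibling edge directly; the descendant edges are then built by a single iterative (top-down) algorithm rather than by recursion on subtrees.

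There are two further gaps even granting the structure. First, you claim the disks returned by the recursive call on $T_i$ are ``confined to the $T_i$-arc'' and hence miss $p_1$ and $\Pc_j$ for $j\ne i$; but the inductive hypothesis only says each $D\in\Dc_i$ is $\gamma_i$-close to $C'$, and such a disk covers an essentially arbitrary arc of $C'$, on which the points of $\Pc_j$ also lie (within $\gamma_j$ of $C'$). The recursion gives no angular localization and no margin, so tuning $\gamma_i$ against $\gamma_j$ cannot ensure $D\cap\Pc_j=\emptyset$. Second, the replacement of a descendant disk $D$ by a nearby $D^*\ni p_1$ requires moving the boundary by $\Theta(\eps)$ (the depth of the lune in which $p_1$ sits), yet the recursive realization gives no lower bound on the margin between $Q_{T_i}(\ell)$ and $\Pc_i\setminus Q_{T_i}(\ell)$ relative to $\partial D$, so an $\eps$-scale perturbation can break the separation $D\cap\Pc_i=Q_{T_i}(\ell)$. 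The paper handles both issues by carrying explicit invariants through the iteration: unfixed points sit on the boundary of $\bigcup_{D\in\Fix}D$ and are not covered by any other disk, a shrinking parameter $\delta$ is recomputed via Observation~\ref{obs:pert} after every move, and new disks are only ever taken $\delta$-close to existing ones.
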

 
 An important property of rooted trees is that we can order their vertices in a special way. For a vertex $v$, let $\Des(v)$ denote all descendants of $v$. Keszegh and Pálvölgyi \cite{abafree} have shown that there is an ordering on the vertices of $T$ such that 
 \begin{enumerate}
    \item For each vertex $v\in V(T)$ the vertices in $S(v)$ are consecutive and they appear in the order later than $v$. 
    \item  Furthermore, suppose $S(v)=\{r_1,\dots, r_k\}$ and they are in this order. Then the vertices  $r_1, \dots,r_{k-1}, r_k, \Des(r_k), \Des(r_{k-1}),\dots, \Des(r_1)$ are ordered like this, and the rest of the vertices of $T$ are not in this interval. (The internal order of each $Des(r_{i})$ is not specified by this statement.) 
\end{enumerate}

Call an order satisfying these properties a \emph{siblings first order} \cite{MR4012917} of $T$.

 \begin{proof}[Proof of Theorem \ref{thm:construction}]  
      In the planar realization of $\Hc(T)$, the vertices will correspond to the points $q_i$ according to an (arbitrary) fix siblings first order.
      We start by showing that the sibling hyperedges can be easily realized and we only need to consider the descendent hyperedges.
      
\subsubsection*{Sibling hyperedges}
      From the properties of the siblings first order we know that for each $v$ the vertices of $S(v)$ are consecutive, i.e., the  points $q_i$ corresponding to $S(v)$ are neighbours along the circle $C$. We apply Lemma \ref{lemma:step} to find a disk that is $\gamma$-close to $C$, and contains exactly the points of $S(v)$. We can also ensure that no point $q_i$ lies on the boundary of this disk. We repeat this for each $v\in V(\Hc)$, until each sibling hyperedge is realized. Let $\Sib$ denote the set of these disks. We apply Observation \ref{obs:pert} to $(\{q_1,\dots, q_n\},\Sib)$ to get $\eps_{Sib}$. That is, if each point $p_i$ is $\eps_{Sib}$-close to $q_i$, then $(\Pc,\Sib)$ will still represent the sibling hyperedges. Therefore, it is enough to show that we can realize the descendent hyperedges for any $\gamma$. 
      
\subsubsection*{Descendent hyperedges}

        It is useful to realize a slightly extended hypergraph. In $\Hc(T)$, we have a descendent hyperedge for each leaf. Now we will create a descendent hyperedge for non-leaf vertices too. So let $Q(v)$ denote the path from the root to $v$, and for each vertex $v$, we will realize the hyperedge $Q(v)$. The disk realizing $Q(v)$ will be denoted by $B(v)$. Let this extended hypergraph be denoted by $\Hc'(T)$. We will realize not only $\Hc(T)$, but $\Hc'(T)$. See Figure \ref{fig:tree} for an example where we omitted the sibling edges.    
        
        \begin{figure}[!ht]
        \centering
        \scalebox{1.0}{\definecolor{ududff}{rgb}{0.,0.,0.}
\definecolor{ffqqqq}{rgb}{0.,0.,0.}
\definecolor{cadmiumgreen}{rgb}{0.0, 0.42, 0.24}

\begin{tikzpicture}[line cap=round,line join=round,>=triangle 45,x=1.0cm,y=1.0cm,scale=0.4]
\clip(-14.367616979866408,-4.957139708737502) rectangle (15.132910466027337,8.31944963675636);
\draw [line width=1.5pt,color=black] (8.,2.) circle (6.cm);
\draw [line width=1.5pt,color=brown] (5.802116387755977,0.1362029631372868) circle (4.205004262810615cm);
\draw [line width=1.5pt,color=green] (4.683143455467177,0.13148071717624432) circle (3.6595353060840554cm);
\draw [line width=1.5pt,color=blue] (5.3184197418610015,-1.3220764596806789) circle (3.4832689814465416cm);
\draw [line width=1.5pt,color=orange] (10.197883612244024,0.13620296313728653) circle (4.205004262810615cm);
\draw [line width=1.5pt,color=cadmiumgreen] (11.316856544532822,0.1314807171762439) circle (3.6595353060840554cm);
\draw [line width=1.5pt,color=red] (10.681580258138998,-1.322076459680679) circle (3.4832689814465416cm);
\draw [line width=1.5pt] (8.,0.5)-- (2.8002124687001437,-1.898112637247945);
\draw [line width=1.5pt] (1.3196487711664556,0.1624532545214694)-- (2.8002124687001437,-1.898112637247945);
\draw [line width=1.5pt] (4.378574322913053,-4.3236568755059565)-- (2.8002124687001437,-1.898112637247945);
\draw [line width=1.5pt] (8.,0.5)-- (13.199787531299856,-1.8981126372479458);
\draw [line width=1.5pt] (11.621425677086947,-4.3236568755059565)-- (13.199787531299856,-1.8981126372479458);
\draw [line width=1.5pt] (14.680351228833543,0.16245325452146855)-- (13.199787531299856,-1.8981126372479458);
\draw [line width=1.5pt] (-8.885971702069108,1.935135150858204)-- (-11.885971702069108,-0.06486484914179613);
\draw [line width=1.5pt] (-5.885971702069108,-0.06486484914179613)-- (-8.885971702069108,1.935135150858204);
\draw [line width=1.5pt] (-5.885971702069108,-0.06486484914179613)-- (-7.885971702069108,-2.064864849141795);
\draw [line width=1.5pt] (-5.885971702069108,-0.06486484914179613)-- (-3.8859717020691082,-2.064864849141795);
\draw [line width=1.5pt] (-11.885971702069108,-0.06486484914179613)-- (-13.885971702069108,-2.064864849141795);
\draw [line width=1.5pt] (-11.885971702069108,-0.06486484914179613)-- (-9.885971702069108,-2.064864849141795);
\begin{scriptsize}
\draw [fill=ffqqqq,color=red] (8.,0.5) circle (6.pt);
\draw [fill=ffqqqq,color=brown] (2.8002124687001437,-1.898112637247945) circle (6.pt);
\draw [fill=ffqqqq,color=green] (1.3196487711664556,0.1624532545214694) circle (6.pt);
\draw [fill=ffqqqq,color=blue] (4.378574322913053,-4.3236568755059565) circle (6.pt);
\draw [fill=ffqqqq,color=black] (8.,0.5) circle (6.pt);
\draw [fill=ffqqqq,color=blue] (13.199787531299856,-1.8981126372479458) circle (6.pt);
\draw [fill=ffqqqq,color=red] (11.621425677086947,-4.3236568755059565) circle (6.pt);
\draw [fill=ffqqqq,color=red] (13.199787531299856,-1.8981126372479458) circle (6.pt);
\draw [fill=ffqqqq,color=cadmiumgreen] (14.680351228833543,0.16245325452146855) circle (6.pt);
\draw [fill=ffqqqq,color=orange] (13.199787531299856,-1.8981126372479458) circle (6.pt);
\draw [fill=ududff,color=black] (-8.885971702069108,1.935135150858204) circle (6.pt);
\draw [fill=ududff,color=brown] (-11.885971702069108,-0.06486484914179613) circle (6.pt);
\draw [fill=ududff,color=orange] (-5.885971702069108,-0.06486484914179613) circle (6.pt);
\draw [fill=ududff,color=red] (-7.885971702069108,-2.064864849141795) circle (6.pt);
\draw [fill=ududff,color=cadmiumgreen] (-3.8859717020691082,-2.064864849141795) circle (6.pt);
\draw [fill=ududff,color=green] (-13.885971702069108,-2.064864849141795) circle (6.pt);
\draw [fill=ududff,color=blue] (-9.885971702069108,-2.064864849141795) circle (6.pt);
\end{scriptsize}
\end{tikzpicture}}
        \caption{Disks realizing all paths from the root.} 
        \label{fig:tree}
    \end{figure}
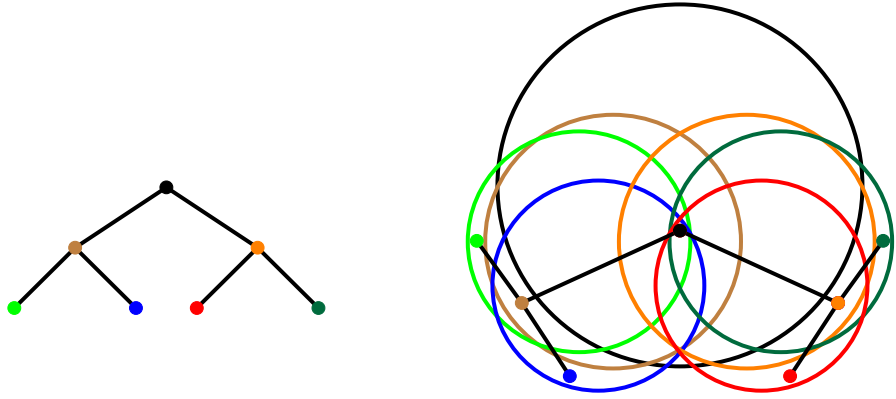

     We prove the existence of the required point set by describing an algorithm that produces a solution.  Let $\Pc=\{p_1,p_2,\dots, p_n\}$ denote the vertices of $T$ in a siblings first order. We will create the planar realization gradually, step by step. The algorithm starts by setting $p_i=q_i$ and in each step we will update the position of some of the $p_i$-s. That is, we identify the vertices of the hypergraph with planar points, and we will update the position of the vertices until they realize the hypergraph $\Hc'(T)$.
     
     During the algorithm, we need to change the position of the points many times without altering the hyperedges that we have already realized. For this reason, we introduce a set of fixed points, $\Pfix$, and a set of disks, $\Fix$, that corresponds to the descendent hyperedges. Once a point is in $\Pfix$  we will not change its position any more. Every disk that we create will be immediately added to $\Fix$, and we will never change its position. The unfixed points, i.e., the points in $\Pc\setminus\Pfix$, will always be kept on the boundary of $\bigcup\limits_{D\in \Fix} D$. Furthermore, if we add $B(v)$ to $\Fix$ in the $k$-th step of the algorithm, $B(v)\cap \Pc$ will remain the same after we finished the $k$-th step, i.e., it will contain exactly the points of the path $Q(v)$ in its interior. Moreover, all descendants of $v$ will be on the boundary of $B(v)$ after the $k$-th step, but later they will be moved off.\\
     
     The structure of the algorithm is the following. We go through the vertices in the siblings first order and for each we do the following. By when we arrive at vertex $p_k$, the disk $B(p_k)$ representing the path $Q(p_k)$ from the root to $p_k$, will be already realized. For each child of $p_k$ we will add a new disk, close to $B(p_k)$, that also realizes the same path from the root to $p_k$. Then we will move the points such that each new disk contains exactly one child of $p_k$. We have summarised the structure of the algorithm in the following pseudo code, while the phases of a step are depicted in Figure \ref{fig:kthstep}.
     
     \smallskip
     \begin{algorithm}
     \DontPrintSemicolon
    \SetAlgoLined
    Set $p_i=q_i$, $\Pfix=\emptyset,\Fix=\emptyset$.\\
    Add the disk of $C$, $B(p_1)$, to $\Fix$.\\
      Move $p_1$ inside $B(p_1)$ and add $p_1$ to $\Pfix$.\\
      \For{$k=1$ \KwTo n}{
      \For{each child $r_i$ of $p_k$}{
       Add a disk $B(r_i)$ representing $Q(p_k)$ to $\Fix$.  \tcc*{Using Lemma \ref{lemma:step} }
       Move the required descendants of $p_k$ to the boundary of $B(r_i)$. 
       }
       \For{each child $r_i$ of $p_k$}{
        Move $r_i$ inside $B(r_i)$. \tcc*{now $B(r_i)$ represents $Q(r_i)$ }
        Add $r_i$ to $\Pfix$.\\
       }
      }
     \caption{Structure of the algorithm}
    \end{algorithm}
     \smallskip
     
     To be able to do these steps we also need a parameter $\delta$ that will ensure that the points do not move too much and the disks are close to each other. There will be only three kinds of operations that we do during the algorithm. 
     \begin{enumerate}
     \renewcommand{\labelenumi}{\alph{enumi})}
        \item  We update the position of a vertex by moving it at a distance less than the current value of $\delta$. If it reached its final position, we add it to $\Pfix$.
         \item We add a new disk to $\Fix$ that is $\delta$-close to one of the disks already in $\Fix$.
         \item We decrease the value of $\delta$. 
    \end{enumerate}
   
     We start with  $\delta=\gamma/n^2$. (The reason for this is explained later.) Every time a disk is added or the position of a vertex is changed, we use Observation \ref{obs:pert} for $(\Pfix,\Fix)$ to update $\delta$ to a smaller value, if needed. After any given update of $\delta$, we only move points at distance less than $\delta$. This ensures two things. Firstly, the points do not move far from their original position, and secondly, if we take a new disk that is $\delta$-close to one of the disks, then it contains the same points of $\Pfix$.   
 
     The algorithm makes an initial adjustment on $p_1$, and then there is one step ($k$-th step) for each point $p_k$. After each step the following properties will hold. 
    
     \begin{proplist}
        \item \label{prop:first} Each point $p_i$ has either reached its final position or it lies on the boundary of the disk $B(w)$ where $w$ is the lowest ancestor of $p_i$ for which $B(w)$ is already defined. Also, in the second case $p_i$ does not belong to the closure of any other disk $D'\in \Fix$. 
        \item \label{prop:second} Suppose $p_j$ is the parent of $p_i$ in $T$. Then in the $j$-th step the point $p_i$ is added to $\Pfix$ and the disk $B(p_i)$ is added to $\Fix$.   
        \item \label{prop:third} Each disk in $\Fix$ contains those points of $P$ that correspond to the appropriate hyperedge of $\Hc'(T)$.  
     \end{proplist}
    
    During the initial adjustment we update $p_1$ to lie inside $C$ but $\delta$-close to $q_1$. We add the disk corresponding to the circle $C$ to $\Fix$. We add $p_1$ to $\Pfix$ and then we update $\delta$ by applying Observation \ref{obs:pert} for $(\Pfix,\Fix)$. Clearly properties $\ref{prop:first},\ref{prop:second}$ and $\ref{prop:third}$ are satisfied.  
    
    In the $k$-th step we update the points in $\Des(p_k)$. (If $p_k$ is a leaf, we continue with the next step.) The process is depicted in Figure \ref{fig:kthstep}; we recommend the reader to consult it before reading on. From properties $\ref{prop:first}$ and $\ref{prop:second}$ we know that at the start of the $k$-th step every point in $\Des(p_k)$ lies on the boundary of $B(p_k)$, and they do not belong to any disk in $\Fix$. Suppose $S(p_k)=\{r_1,\dots, r_l\}$.   To maintain the three properties, we want to add the disks $B(r_1),\dots, B(r_l)$, and by the end of the $k$-th step we  want to place the points of $\Des(r_i)$ on the boundary of $B(r_i)$.
    
      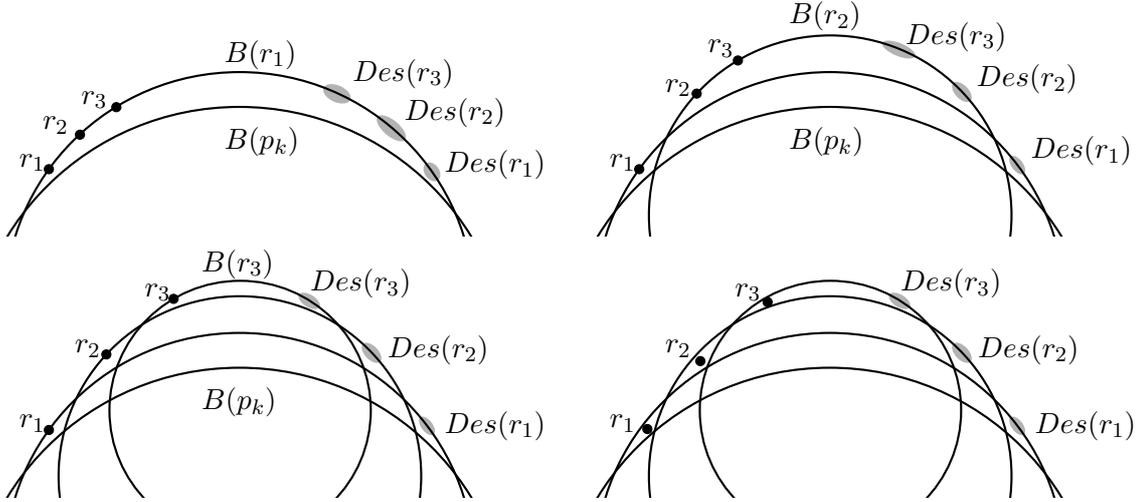
\begin{figure}[!ht]
        \centering
        \scalebox{1.0}{\begin{tikzpicture}[line cap=round,line join=round,>=triangle 45,x=1.0cm,y=1.0cm,scale=1.7]
\clip(4.342139180717218,-5.912130423887219) rectangle (8.76,-4.110066229186957);
\draw [line width=0.8pt] (6.4,-7.) circle (2.09837130335316cm);
\draw [line width=0.8pt] (6.402051166379084,-6.4451889919898715) circle (1.8151700084948583cm);
\fill [rotate around={-40.45359405719779:(7.576154042999593,-5.068234414047189)},line width=0.8pt,fill=black,fill opacity=0.30000001192092896] (7.576154042999593,-5.068234414047189) ellipse (0.138745104653525095cm and 0.06216230811077403cm);
\fill [rotate around={-55.126578882263765:(7.890131372103562,-5.408115776653981)},line width=0.8pt,fill=black,fill opacity=0.30000001192092896] (7.890131372103562,-5.408115776653981) ellipse (0.08058393254481656cm and 0.05715629214874051cm);
\fill [rotate around={-24.603310417430087:(7.155645430710279,-4.799447589705677)},line width=0.8pt,fill=black,fill opacity=0.30000001192092896] (7.155645430710279,-4.799447589705677) ellipse (0.11329648854767155cm and 0.06545837592902208cm);
\draw (4.62,-5.19) node[anchor=north west] {$r_1$};
\draw (4.8,-4.9) node[anchor=north west] {$r_2$};
\draw (5.1,-4.7) node[anchor=north west] {$r_3$};
\draw (7.2,-4.45) node[anchor=north west] {$\Des(r_3)$};
\draw (7.6,-4.75) node[anchor=north west] {$\Des(r_2)$};
\draw (7.911360757629625,-5.161633955392194) node[anchor=north west] {$\Des(r_1)$};
\draw (6.205647383175144,-4.9780161039260514) node[anchor=north west] {$B(p_k)$};
\draw (6.205647383175144,-4.3) node[anchor=north west] {$B(r_1)$};
\begin{scriptsize}
\draw [fill=black] (4.926961488905327,-5.387383431808988) circle (1.01pt);
\draw [fill=black] (5.167411707739564,-5.118318319718645) circle (1.01pt);
\draw [fill=black] (5.448952029243155,-4.903591635818705) circle (1.01pt);
\end{scriptsize}
\end{tikzpicture}
        \begin{tikzpicture}[line cap=round,line join=round,>=triangle 45,x=1.0cm,y=1.0cm,scale=1.7]
\clip(4.342139180717218,-5.912130423887219) rectangle (8.76,-4.0110066229186957);
\draw [line width=0.8pt] (6.4,-7.) circle (2.09837130335316cm);
\draw [line width=0.8pt] (6.402051166379084,-6.4451889919898715) circle (1.8151700084948583cm);
\draw [line width=0.8pt] (6.404633700414337,-5.746650674508501) circle (1.402658952855516cm);
\fill [rotate around={-46.65335223797677:(7.422981265378558,-4.785441918826188)},line width=0.8pt,fill=black,fill opacity=0.30000001192092896] (7.422981265378558,-4.785441918826188) ellipse (0.09644774753534836cm and 0.05295342338148187cm);
\fill [rotate around={-53.06584322136987:(7.851810318717173,-5.355326935741531)},line width=0.8pt,fill=black,fill opacity=0.30000001192092896] (7.851810318717173,-5.355326935741531) ellipse (0.0848580297416653cm and 0.044267673423772476cm);
\fill [rotate around={-22.320519076946074:(6.9351473496602365,-4.454443084531315)},line width=0.8pt,fill=black,fill opacity=0.30000001192092896] (6.9351473496602365,-4.454443084531315) ellipse (0.13711562213590048cm and 0.050928807553377316cm);
\draw (4.62,-5.19) node[anchor=north west] {$r_1$};
\draw (5.05,-4.607488307142961) node[anchor=north west] {$r_2$};
\draw (5.38,-4.3) node[anchor=north west] {$r_3$};
\draw (6.91,-4.16) node[anchor=north west] {$\Des(r_3)$};
\draw (7.46,-4.49) node[anchor=north west] {$\Des(r_2)$};
\draw (7.88,-5.05) node[anchor=north west] {$\Des(r_1)$};
\draw (6.005647383175144,-4.0) node[anchor=north west] {$B(r_2)$};
\draw (6.005647383175144,-4.9780161039260514) node[anchor=north west] {$B(p_k)$};
\begin{scriptsize}
\draw [fill=black] (4.926961488905327,-5.387383431808988) circle (1.01pt);
\draw [fill=black] (5.371509183301354,-4.797911579622135) circle (1.01pt);
\draw [fill=black] (5.690470118571371,-4.539413118600402) circle (1.01pt);
\end{scriptsize}
\end{tikzpicture}}
        \scalebox{1.0}{\begin{tikzpicture}[line cap=round,line join=round,>=triangle 45,x=1.0cm,y=1.0cm,scale=1.7]
\clip(4.342139180717218,-5.912130423887219) rectangle (8.76,-3.9);
\draw [line width=0.8pt] (6.4,-7.) circle (2.09837130335316cm);
\draw [line width=0.8pt] (6.402051166379084,-6.4451889919898715) circle (1.8151700084948583cm);
\draw [line width=0.8pt] (6.404633700414337,-5.746650674508501) circle (1.402658952855516cm);
\draw [line width=0.8pt] (6.406526927693963,-5.234559921602026) circle (1.0109653771645686cm);
\fill [rotate around={-46.65335223797677:(7.422981265378558,-4.785441918826188)},line width=0.8pt,fill=black,fill opacity=0.30000001192092896] (7.422981265378558,-4.785441918826188) ellipse (0.09644774753534836cm and 0.05295342338148187cm);
\fill [rotate around={-53.06584322136987:(7.851810318717173,-5.355326935741531)},line width=0.8pt,fill=black,fill opacity=0.30000001192092896] (7.851810318717173,-5.355326935741531) ellipse (0.0848580297416653cm and 0.044267673423772476cm);
\fill [rotate around={-32.04803973592604:(6.9413070916056565,-4.380327359291241)},line width=1.0pt,fill=black,fill opacity=0.30000001192092896] (6.9413070916056565,-4.380327359291241) ellipse (0.09327448898849997cm and 0.048499800989219445cm);
\draw (4.62,-5.19) node[anchor=north west] {$r_1$};
\draw (5.05,-4.607488307142961) node[anchor=north west] {$r_2$};
\draw (5.59,-4.166789799480186) node[anchor=north west] {$r_3$};
\draw (6.87,-4.05) node[anchor=north west] {$\Des(r_3)$};
\draw (7.470662249966846,-4.568218143093803) node[anchor=north west] {$\Des(r_2)$};
\draw (7.911360757629625,-5.161633955392194) node[anchor=north west] {$\Des(r_1)$};
\draw (6.03,-4.9780161039260514) node[anchor=north west] {$B(p_k)$};
\draw (6.03,-3.9) node[anchor=north west] {$B(r_3)$};
\begin{scriptsize}
\draw [fill=black] (4.926961488905327,-5.387383431808988) circle (1.01pt);
\draw [fill=black] (5.371509183301354,-4.797911579622135) circle (1.01pt);
\draw [fill=black] (5.891794067360919,-4.364444368173137) circle (1.01pt);
\end{scriptsize}
\end{tikzpicture}
        \begin{tikzpicture}[line cap=round,line join=round,>=triangle 45,x=1.0cm,y=1.0cm,scale=1.7]
\clip(4.342139180717218,-5.912130423887219) rectangle (8.76,-4.110066229186957);
\draw [line width=0.8pt] (6.4,-7.) circle (2.09837130335316cm);
\draw [line width=0.8pt] (6.402051166379084,-6.4451889919898715) circle (1.8151700084948583cm);
\draw [line width=0.8pt] (6.404633700414337,-5.746650674508501) circle (1.402658952855516cm);
\draw [line width=0.8pt] (6.406526927693963,-5.234559921602026) circle (1.0109653771645686cm);
\fill [rotate around={-46.65335223797677:(7.422981265378558,-4.785441918826188)},line width=0.8pt,fill=black,fill opacity=0.30000001192092896] (7.422981265378558,-4.785441918826188) ellipse (0.09644774753534836cm and 0.05295342338148187cm);
\fill [rotate around={-53.06584322136987:(7.851810318717173,-5.355326935741531)},line width=0.8pt,fill=black,fill opacity=0.30000001192092896] (7.851810318717173,-5.355326935741531) ellipse (0.0848580297416653cm and 0.044267673423772476cm);
\fill [rotate around={-32.04803973592604:(6.9413070916056565,-4.380327359291241)},line width=1.0pt,fill=black,fill opacity=0.30000001192092896] (6.9413070916056565,-4.380327359291241) ellipse (0.09327448898849997cm and 0.048499800989219445cm);
\draw (4.62,-5.19) node[anchor=north west] {$r_1$};
\draw (5.05,-4.607488307142961) node[anchor=north west] {$r_2$};
\draw (5.59,-4.166789799480186) node[anchor=north west] {$r_3$};
\draw (6.87,-4.05) node[anchor=north west] {$\Des(r_3)$};
\draw (7.470662249966846,-4.568218143093803) node[anchor=north west] {$\Des(r_2)$};
\draw (7.911360757629625,-5.161633955392194) node[anchor=north west] {$\Des(r_1)$};
\begin{scriptsize}
\draw [fill=black] (4.99,-5.38) circle (1.01pt);
\draw [fill=black] (5.4,-4.85) circle (1.01pt);
\draw [fill=black] (5.92,-4.39) circle (1.01pt);
\end{scriptsize}
\end{tikzpicture}}
        \caption{Phases of the $k$-th step of the algorithm.}\label{fig:kthstep}
    \end{figure}
    
    To achieve this, we apply Observation \ref{obs:pert} and Lemma \ref{lemma:step} for each child in the following way. First apply Observation \ref{obs:pert} for the points in $\Des(p_k)$ and disks in $\Fix \setminus \{B(p_k)\}$. Since the points in $\Des(p_k)$ do not belong to the boundary of any other disk this is possible. We update $\delta$ to the value of $\eps$ obtained from Observation \ref{obs:pert} if $\eps<\delta$. 
    
    Then we apply Lemma \ref{lemma:step} for the boundary of the disk $B(p_k)$, such that the $b_i$-s are the points $\{r_1,\dots, r_l\}\cup \Des(r_l)\cup \dots \cup \Des(r_1)$ and $\eps$ is chosen to be the current value of $\delta$. The points $a$ and $c$ have to be chosen carefully. We know that the points in $\{r_1,\dots, r_l\}\cup \Des(r_l)\cup \dots \cup \Des(r_1)$ lie on an arc of $B(p_k)$ that is not covered by any disk in $\Fix$. We choose $a$ and $c$ on the two ends of this arc such that they are also not covered by any disk. Lemma \ref{lemma:step} gives us a circle $C'$, this defines $B(r_1)$, which is added to $\Fix$. The position of the points in $\{r_1,\dots, r_l\}\cup \Des(r_l)\cup \dots \cup \Des(r_1)$ is updated according to the result of Lemma \ref{lemma:step}. As usual, $\delta$ is also updated. 
    
    
    When we apply Observation \ref{obs:pert} for the $i$-th time ($i>1$) we apply it for the points $\{r_i,\dots, r_l\}\cup \Des(r_l)\cup \dots \cup \Des(r_i)$ and disks in $\Fix \setminus \{B(r_{i-1})\}$.
    
    When we apply Lemma \ref{lemma:step} for the $i$-th time ($i>1$), we apply it for the boundary of $B(r_{i-1})$, such that the $b_i$-s are the points in $\{r_i,\dots, r_l\}\cup \Des(r_l)\cup \dots \cup \Des(r_i)$ and $\eps$ is the current value of $\delta$. The point $a$ is chosen between $r_{i-1}$ and $r_i$. The point $c$ is chosen after the points of $\{r_i,\dots, r_l\}\cup \Des(r_l)\cup \dots \cup \Des(r_i)$ but before the points of $Des(r_{i-1})\cup\dots\cup D(r_{1})$. If $Des(r_{i-1})\cup\dots\cup D(r_{1})$ is empty, $c$ is chosen before the arc of $B(r_{i-1})$ reaches any other disk.    
    In the $i$-th case we get $B(r_i)$, which is added to $\Fix$, and new positions for the points $\{r_i,\dots, r_l\}\cup \Des(r_l)\cup \dots \cup \Des(r_i)$ on $B(r_i)$. We update $\delta$ after each application of Lemma \ref{lemma:step}.
    
    Finally, we finish the $k$-th step by moving $r_1,r_2, \dots, r_l$ inside $B(r_1),B(r_2),\dots, B(r_l)$ respectively, but at most $\delta$ far. Since $r_i$ was lying on the boundary of $B(r_1)$, we can also ensure that they are not moved into any other disk. Then we add $r_1,\dots, r_l$ to $\Pfix$. We update $\delta$ again by applying Observation \ref{obs:pert} to $(\Pfix,\Fix)$.
  
    Let us see why properties $\ref{prop:first},\ref{prop:second},\ref{prop:third}$ remain true during the run of the algorithm. Suppose they are true after the $(k-1)$-th step.

    The first part of property $\ref{prop:first}$ and property $\ref{prop:second}$ are maintained since we have created the disks $B(r_1),\dots, B(r_l)$, and by the end of the $k$-th step the points of $\Des(r_i)$ are on the boundary of $B(r_i)$. Points $r_1,\dots, r_l$ were added to $\Pfix$.
    
    The second part of property $\ref{prop:first}$ could be violated in two ways. It could be that one of the new disks covers a point it should not. We have always chosen $a$ and $c$ such that this is avoided. The other possible violation is that we move a point into a disk. This is avoided, since if a point lies on the boundary of disk $D$ before moving it, then we have updated $\delta$ for the disks in $\Fix\setminus\{D\}$ right before moving the point. Also the movement is done by Lemma \ref{lemma:step} so $v$ cannot move into $D$.

    As for property $\ref{prop:third}$, note that the only new disks in $\Fix$ are $B(r_1),\dots, B(r_l)$. Since $\ref{prop:third}$ was true before the step, $B(p_k)$ contains the vertices of $Q(p_k)$ and each of those are in $\Pfix$. When we add $B(r_1)$, it is $\delta$-close to $B(p_k)$, so it contains exactly the points in $Q(p_k)$. Similarly, when $B(r_i)$ is added, it is $\delta$-close to $B(r_{i-1})$, so each of $B(r_1),\dots, B(r_l)$ contains the vertices of $Q(p_k)$. When finally we move the points $r_1,r_2, \dots, r_l$ inside $B(r_1),B(r_2),\dots, B(r_l)$, respectively, we achieve that $B(r_{i})$ contains the vertices of $Q(r_i)$.
    
    We also need to check property $\ref{prop:third}$ for the disks that were already in $\Fix$. Consider a disk $D\in \Fix$. No point inside $D$ was moved, since they are in $\Pfix$. The points in $\Pc\setminus \Pfix$ remain outside of $D$, since $r_i$ only moves into one of the new disks and the rest of the points remain on the boundary of $\bigcup\limits_{D\in \Fix} D$. Hence property $\ref{prop:third}$ remains true.

    
    Finally, we need to show that we get a solution for Theorem \ref{thm:main}. Let $\Dc$ contain the disks in $\Sib$ and those disks in $\Fix$ that correspond to descendent edges that end at a leaf. Property $\ref{prop:third}$ and the argument for the sibling edges tells us that $(\Pc,\Dc)$ is a planar representation of $\Hc(T)$. 
    
    For property $(I)$, note that each point moves less than $n^2$ times since in the $k$-th step they move less than  $n$ times. We started with $\delta=\gamma/{n^2}$, so each point is at most $n^2\frac{\gamma}{n^2}=\gamma$ far from their original position. The first disk was given by $C$ and each disk was taken $\delta$-close to a previous disk, in at most $n$ steps reaching back to the first disk. Hence, all disks are $\gamma/n$ close to $C$, implying $(II)$. 
\end{proof}

\section{A point set that is not three-colorable}\label{sec:3}

Here we prove Theorem \ref{thm:main} by realizing for any $m$ a non-three-colorable $m$-uniform hypergraph with disks.

We introduce a general operation for constructing hypergraphs that are not $c$-colorable, which is similar to prolonging $\Hc(T)$ with the children of a leaf. Essentially the same construction was used in  \cite{MR4012917}.

\begin{defi}\label{def:extension}
Suppose we have a hypergraph $\Ac$, a hypergraph $\Bcal$ and let $F$ be an edge of $\Ac$. Then we define \emph{``$\Ac$ extended by $\Bcal$ through $F$''} as follows. Take $|V(\Bcal)|$ copies of the edge $F$ and add a new vertex to each copy. Then we have $|V(\Bcal)|$ new vertices. Add a set of edges such that they form the hypergraph $\Bcal$ on these new vertices. The resulting hypergraph is denoted by $\Ac+_F\Bcal$. 
\end{defi}

Suppose $\Ac$ is $m$-uniform and $\Bcal$ is $(m+1)$-uniform. Then, if we extend $\Ac$ by $\Bcal$ through each edge of $\Ac$, we get a hypergraph that is $(m+1)$-uniform.

\begin{claim}\label{clam:color}
Suppose $\Ac$ is not $c$-colorable and $\Bcal$ is not $(c-1)$-colorable. Then any extension of $\Ac$ by $\Bcal$ is not $c$-colorable.
\end{claim}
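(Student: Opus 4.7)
The plan is to argue by contradiction: suppose $\chi$ is a proper $c$-coloring of $\Ac+_F\Bcal$ using the colors $\{1,\dots,c\}$. The strategy is to use $\chi$ restricted to $V(\Ac)$ to force $F$ to be monochromatic, and then use $\chi$ restricted to the newly added vertices to produce a $(c-1)$-coloring of $\Bcal$, each giving a contradiction against one of the two hypotheses.

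First, I would restrict $\chi$ to $V(\Ac)\subseteq V(\Ac+_F\Bcal)$. By Definition~\ref{def:extension}, every edge of $\Ac$ distinct from $F$ is still an edge of $\Ac+_F\Bcal$, so $\chi|_{V(\Ac)}$ is a $c$-coloring of $V(\Ac)$ in which every edge of $\Ac\setminus\{F\}$ is non-monochromatic. Since $\Ac$ is not $c$-colorable, the restriction cannot be a proper coloring of $\Ac$, so some edge of $\Ac$ must be monochromatic under $\chi|_{V(\Ac)}$; by elimination this edge has to be $F$ itself. Let $\alpha$ denote its common color.

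Next I would examine the new vertices and edges. For every vertex $v\in V(\Bcal)$, Definition~\ref{def:extension} produces a new vertex $v'$ together with the edge $F\cup\{v'\}$ of $\Ac+_F\Bcal$. Because $F$ is entirely colored $\alpha$, the edge $F\cup\{v'\}$ is monochromatic exactly when $\chi(v')=\alpha$, so properness of $\chi$ forces $\chi(v')\neq\alpha$ for every new vertex $v'$. Thus $\chi$ restricted to the set of new vertices uses only the $c-1$ colors $\{1,\dots,c\}\setminus\{\alpha\}$. Since the new vertices carry an isomorphic copy of $\Bcal$ (via the edges added in the second half of the construction) and $\Bcal$ is not $(c-1)$-colorable, some edge of $\Bcal$ on the new vertices is monochromatic under $\chi$, contradicting the assumption that $\chi$ was proper on $\Ac+_F\Bcal$.

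The argument is essentially a two-step pigeonhole applied to the two halves of the extension, and I do not foresee a genuine obstacle. The only bookkeeping point is to note that edges of $\Ac$ other than $F$ are preserved in $\Ac+_F\Bcal$ and that the new vertices together with the added edges really do form an isomorphic copy of $\Bcal$; both facts are immediate from Definition~\ref{def:extension}.
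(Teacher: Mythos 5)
Your proof is correct and follows essentially the same argument as the paper: force $F$ to be monochromatic via the non-$c$-colorability of $\Ac$, observe that the edges $F\cup\{v'\}$ then exclude that color from every new vertex, and invoke the non-$(c-1)$-colorability of $\Bcal$. You merely cast it as a proof by contradiction and spell out the bookkeeping a bit more explicitly, but the ideas coincide.
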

\begin{proof}
 Since any coloring of $\Ac$ using $c$-colors has a monochromatic edge, and we have only ``lost'' the edge $F$, our only chance is to color the extended hypergraph such that the vertices of $F$ are monochromatic. These are contained in each new edge, implying that none of the new vertices have the same color as the vertices of $F$. But then the copy of $\Bc$ is $(c-1)$-colored, and thus one of its edges is monochromatic.  
\end{proof} 

Let $\Gc_i$ denote the hypergraph that has $i$ vertices and only one edge that contains all the vertices. Clearly, $\Gc_1$ is not $c$-colorable for any $c$ and $\Gc_i$ is not 1-colorable. Hence, we can build non-$c$-colorable hypergraphs starting from these trivial ones and using them in the extensions.    

\begin{obs}
    For any rooted tree $T$ the hypergraph $\Hc(T)$ can be built with a series of extensions starting from $\Gc_1$, where each extending hypergraph is one of the $\Gc_i$-s.
\end{obs}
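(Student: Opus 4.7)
The plan is to build $\Hc(T)$ incrementally by processing the vertices of $T$ in a top-down order (any order in which each vertex appears before its children), starting from $\Gc_1$ on the root and applying exactly one extension per non-leaf vertex.

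First, I would initialize the construction with $\Gc_1$ on the single vertex $r$, the root of $T$; this hypergraph already contains the path $Q(r)=\{r\}$ as its only edge. I would then process each non-leaf vertex $v$ with children $c_1,\dots,c_k$ in top-down order, maintaining as an invariant that at the moment $v$ is processed, the current hypergraph contains $Q(v)$ as an edge. To process $v$, I would extend the current hypergraph by $\Gc_k$ through the edge $Q(v)$. Unpacking Definition \ref{def:extension}, this removes $Q(v)$, introduces $k$ new vertices which I identify with $c_1,\dots,c_k$, creates the $k$ new edges $Q(v)\cup\{c_i\}=Q(c_i)$, and adds the unique edge of $\Gc_k$, which is $\{c_1,\dots,c_k\}=S(v)$. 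Hence the invariant is restored for each child $c_i$, enabling the recursion, and the desired sibling edge $S(v)$ has been inserted.

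After every non-leaf vertex has been processed, I would check that the resulting edge set is exactly that of $\Hc(T)$. The sibling edge $S(v)$ for each non-leaf $v$ is present, having been added when $v$ was processed and never targeted by a later extension (only edges of the form $Q(\cdot)$ are ever used as extension targets). The descendant edge $Q(\ell)$ for each leaf $\ell$ is present, having been added when $\ell$'s parent was processed and never removed, because $\ell$ itself is never processed. Conversely, $Q(v)$ for a non-leaf $v$ is \emph{not} present in the final hypergraph, because it was the target of the extension at step $v$ and was therefore removed. Thus the final hypergraph equals $\Hc(T)$, and every extending hypergraph is of the form $\Gc_{|S(v)|}$.

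There is no genuine obstacle here; the argument is a straightforward bookkeeping induction. The only point worth pausing on is that the extension operation removes its target edge $F$ (which is the reading of Definition \ref{def:extension} consistent with the proof of Claim \ref{clam:color}), and this is precisely why the paths $Q(v)$ for non-leaf $v$ do not survive to spoil the final hypergraph.
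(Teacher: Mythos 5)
Your proof is correct, and it supplies the argument that the paper leaves implicit (the paper states this as an unproved observation). Your top-down induction on non-leaf vertices, applying one extension by $\Gc_{|S(v)|}$ per non-leaf $v$ through the edge $Q(v)$, is the natural realization and the accounting is accurate: the sibling edges $S(v)$ and the leaf-paths $Q(\ell)$ survive, while $Q(v)$ for internal $v$ is consumed by the extension at $v$. You also correctly resolved the only ambiguity in Definition~\ref{def:extension} --- whether the target edge $F$ is removed --- by reading it off from the proof of Claim~\ref{clam:color} (and it is further corroborated by the paper's uniformity remark just after the definition and by the construction of $\Fcal_i(m)$). No gaps.
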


Now we are ready to define the non-three-colorable hypergraphs that we will realize with points and disks.  For each $m$ let $\Hc^2(m)=\Hc(T)$ where $T$ is the $m$-ary tree of depth $m$. Clearly, $\Hc^2(m)$ is $m$-uniform and not two-colorable. We define non-three-colorable $m$-uniform hypergraphs $\Hc^3(m)$ based on them inductively. 

First we create a sequence of hypergraphs. Let $\Fcal_1(m)=\Gc_1$ and let $v$ denote the single vertex of it. For for $i>1$, let $\Fcal_i(m)$ be the hypergraph that we get if we extend each edge of $\Fcal_{i-1}(m)$ that contains $v$ by $\Hc^2(m)$. Note that $\Fcal_i(m)$ has only two types of edges. The first type are the ones that contain $v$, each of these contains exactly $i$ vertices. (They are like the descendent edges.) The second type are those that were added in a copy of $\Hc^2(m)$, these contain exactly $m$ vertices. (They are a bit like the sibling edges.) 
Therefore, $\Fcal_m(m)$ is $m$-uniform. Also, by Claim \ref{clam:color} and induction, no $\Fcal_i(m)$ is three-colorable. 

Let $\Hc^3(m)=\Fcal_m(m)$. For example, $\Fcal_1(2)$ is $\Gc_1$ and $\Hc^2(2)$ is a triangle graph. Extending $\Gc_1$ through its single edge by a triangle gives us the complete graph on $4$ vertices, so $\Hc^3(2)$ is just $K_4$.  
 
 \subsection{Realizing $\Hc^3(m)$}
 
 Since $\Hc^3(m)$ was built by a series of extensions, it is enough to show that we can do each extension geometrically. 
 This step is essentially the same as in \cite{MR2364757}.
 
 \begin{lemma}\label{lemma:extension}
  Suppose a hypergraph $\Ac$ is realized with $(\Pc,\Dc)$ such that every disk $D\in \Bc \subset \Dc$ has a point on its boundary that does not belong to the closure of any other disk $D'\in \Dc$. Then we can also realize $\Ac+_F \Hc(T)$ for any rooted tree $T$ and any edge $F\in \Bc$, with a pair $(\Pc',\Dc')$, such that any disk $D\in \Bc$ and every new copy of $F$ has a point on its boundary that does not belong to the closure of any other disk $D'\in \Dc'$.  
 \end{lemma}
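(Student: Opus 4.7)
The plan is to realize $\Hc(T)$ inside a small neighborhood of the witness point $p\in\partial D_F$ using Theorem \ref{thm:construction}, and then realize each new copy of $F$ as a small perturbation of $D_F$.

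Let $D_F\in\Bc$ realize $F$, and let $p\in\partial D_F$ lie outside the closure of every other disk of $\Dc$. Fix a small open neighborhood $N$ of $p$ with $N\cap\Pc=\emptyset$, $N$ disjoint from the closure of every $D\in\Dc\setminus\{D_F\}$, and each $D\in\Bc\setminus\{D_F\}$ still admitting a witness boundary point outside $N$. Inside $N$, take a small circle $C_0$ tangent externally to $\partial D_F$ at $p$, bounding a closed disk $K\subseteq N$ that meets $\overline{D_F}$ only at $p$. Apply Theorem \ref{thm:construction} with $C_0$, with $n:=|V(T)|$ angularly distinct prescribed points $q_1,\dots,q_n$ on $C_0\setminus\{p\}$, and with a precision parameter $\gamma$ small enough that the output is confined to $N$. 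This yields points $p'_1,\dots,p'_n$ close to $q_1,\dots,q_n$ and a family $\Dc_T$ of disks $\gamma$-close to $C_0$ realizing $\Hc(T)$ on $\{p'_1,\dots,p'_n\}$. Because $\Dc_T\subseteq N$, these disks contain no point of $\Pc$ and avoid the closures of the old non-$D_F$ disks, so they correctly realize the sibling and descendent edges of $\Hc(T)$.

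The main technical step is the construction, for each $i$, of a disk $D_i$ realizing the copy edge $F\cup\{p'_i\}$ as a small perturbation of $D_F$. Pick two points $a_i,b_i\in\partial D_F\cap N$ flanking the outward normal ray from $D_F$ through $p'_i$, close enough to $p$ that the short arc they delimit separates $p'_i$ from every $p'_j$ with $j\neq i$ angularly. Within the one-parameter pencil of disks through $a_i$ and $b_i$ that are close to $D_F$, select the member whose boundary bulges just past $\partial D_F$ between $a_i$ and $b_i$ enough to enclose $p'_i$ in its interior but no other $p'_j$; let this be $D_i$. Being close to $D_F$, it picks up exactly $F$ among the old points of $\Pc$, hence realizes $F\cup\{p'_i\}$. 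The key obstacle is the existence of a pencil member simultaneously enclosing $p'_i$, excluding all other $p'_j$, and preserving the old incidences; this is achievable by taking the $q_i$ with pairwise distinct angular positions on $C_0$ and then choosing $a_i,b_i$ sufficiently close to $p$ so that the bulge region avoids every $p'_j$ with $j\neq i$.

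Setting $\Pc'=\Pc\cup\{p'_1,\dots,p'_n\}$ and $\Dc'=(\Dc\setminus\{D_F\})\cup\{D_1,\dots,D_n\}\cup\Dc_T$, the pair $(\Pc',\Dc')$ realizes $\Ac+_F\Hc(T)$. The boundary-point property follows: each $D\in\Bc\setminus\{D_F\}$ retains its original witness (outside $N$, hence outside every new disk), while each $D_i$ admits a witness point on $\partial D_i$ placed just beyond $p'_i$ on the bulge side---this point is outside $K$ (so outside every disk of $\Dc_T$, which are $\gamma$-close to $C_0$), outside $D_j$ for $j\neq i$ (whose boundary near $p'_i$ approximates $\partial D_F$, leaving the region just past $p'_i$ outside $D_j$), and outside every remaining old disk (which is disjoint from $N$).
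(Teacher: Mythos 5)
Your overall plan — realize $\Hc(T)$ near the witness point $p$ using a small circle $C_0$ tangent to $\partial D_F$, and realize each new copy of $F$ by a disk close to $D_F$ — is the same approach as the paper. However, your construction of the disks $D_i$ via the pencil through $a_i,b_i\in\partial D_F$ has a genuine quantitative gap. A member of that pencil that is $\Delta$-close to $D_F$ is a circle of curvature roughly $1/R + \Delta/R^2$ (where $R$ is the radius of $D_F$), so its boundary pulls away from $\partial D_F$ between $a_i$ and $b_i$ by at most about $c^2\Delta/(2R^2)$, where $c$ is the half-length of the chord $a_ib_i$. On the other hand, since $C_0$ has a much smaller radius $r_0$ than $D_F$, the point $p'_i$ sits at distance roughly $s_i^2/(2r_0)$ from $\partial D_F$ (here $s_i$ is the arc-length from $p$ to the foot of the normal through $p'_i$). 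Your prescription that the short arc $a_ib_i$ ``angularly separates $p'_i$ from every $p'_j$'' forces $c$ to be at most of order $s_i/n$, so the bulge is bounded by roughly $s_i^2\Delta/(n^2R^2)$, which is smaller than $s_i^2/(2r_0)$ by a factor of order $\Delta r_0/(n^2R^2)\ll 1$. In other words, the pencil member cannot reach $p'_i$ unless $\Delta$ is taken so large that it will no longer satisfy the tolerance from Observation~\ref{obs:pert}, destroying the claim that $D_i$ picks up exactly $F$ among the old points.

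The paper avoids this curvature mismatch by using a two-parameter perturbation of $D_F$: first rotate $D_F$ by a small angle about the center of $C_0$, which keeps the disk tangent to $C_0$ but moves the tangent point along $C_0$ to the desired angular position, and only then enlarge the radius by a tiny $\Delta$. After the rotation the target point sits at distance $O(\Delta)$ rather than $O(s_i^2/r_0)$ from the perturbed boundary, so an arbitrarily small enlargement suffices, and both the rotation and the enlargement are $\eps$-small in the sense of Observation~\ref{obs:pert}. If you replace your pencil step with this rotate-then-enlarge step (the order in which you invoke Theorem~\ref{thm:construction} versus build the $D_i$ can be kept as you have it, provided $\gamma$ is chosen small relative to the lens $D_i\cap C_0$), the rest of your argument, including the witness-point bookkeeping, goes through.
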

 \begin{proof}
 
 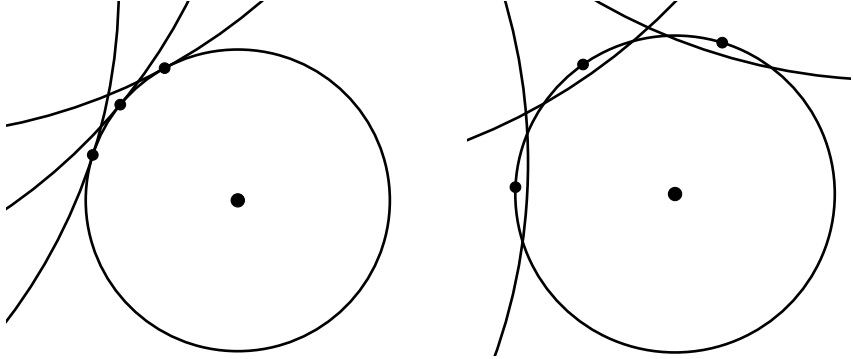
\begin{figure}[!ht]
     \centering
     \scalebox{1.0}{
    \definecolor{uuuuuu}{rgb}{0.,0.,0.}
    \definecolor{xdxdff}{rgb}{0.,0.,0.}
    \definecolor{ududff}{rgb}{0.,0.,0.}
    \begin{tikzpicture}[line cap=round,line join=round,>=triangle 45,x=1.0cm,y=1.0cm]
    \clip(-1.044117164263055,-0.051409109231156304) rectangle (4.9,4.635371044692869);
    \draw [line width=1.pt] (2.,2.) circle (2.cm);
    \draw [line width=1.pt] (2.,2.) circle (9.362666077956689cm);
    \draw [line width=1.pt] (-6.927584110616714,4.820949846272505) circle (7.36266607795669cm);
    \draw [line width=1.pt] (-5.2356168707059885,7.9418317535652045) circle (7.362666077956687cm);
    \draw [line width=1.pt] (-2.5033975073380645,10.20846678608266) circle (7.362666077956691cm);
    \begin{scriptsize}
    \draw [fill=ududff] (2.,2.) circle (2.5pt);
    \draw [fill=ududff] (-6.927584110616714,4.820949846272505) circle (2.5pt);
    \draw [fill=xdxdff] (-5.2356168707059885,7.9418317535652045) circle (2.5pt);
    \draw [fill=ududff] (-2.5033975073380645,10.20846678608266) circle (2.5pt);
    \draw [fill=uuuuuu] (0.09293975961918056,2.6025954194637153) circle (2.0pt);
    \draw [fill=uuuuuu] (0.4543682727847338,3.2692606366800927) circle (2.0pt);
    \draw [fill=uuuuuu] (1.0380095861923797,3.7534464473551052) circle (2.0pt);
    \end{scriptsize}
    \end{tikzpicture}
    \begin{tikzpicture}[line cap=round,line join=round,>=triangle 45,x=1.0cm,y=1.0cm]
    \clip(-1.044117164263055,-0.051409109231156304) rectangle (3.981058668644672,4.635371044692869);
    \draw [line width=1.pt] (1.6800281588004644,2.084203116105142) circle (9.664648100444605cm);
    \draw [line width=1.pt] (-7.975613653929894,2.5013410904981606) circle (7.7223145349873565cm);
    \draw [line width=1.pt] (-3.886638773837614,9.984686782723362) circle (7.722314534987356cm);
    \draw [line width=1.pt] (4.536228073061077,11.317162823328237) circle (7.722314534987356cm);
    \draw [line width=1.pt] (1.6800281588004644,2.084203116105142) circle (2.10083322008578cm);
    \begin{scriptsize}
    \draw [fill=ududff] (1.6800281588004644,2.084203116105142) circle (2.5pt);
    \draw [fill=ududff] (-7.975613653929894,2.5013410904981606) circle (2.5pt);
    \draw [fill=xdxdff] (-3.886638773837614,9.984686782723362) circle (2.5pt);
    \draw [fill=ududff] (4.536228073061077,11.317162823328237) circle (2.5pt);
    \draw [fill=uuuuuu] (-0.41884733781488,2.1748776347069967) circle (2.0pt);
    \draw [fill=uuuuuu] (0.469985258562732,3.801554681219357) circle (2.0pt);
    \draw [fill=uuuuuu] (2.300888804808757,4.091198949896073) circle (2.0pt);
    \end{scriptsize}
\end{tikzpicture}}
    \caption{Realizing an extension.}
    \label{fig:extension}
 \end{figure}

Suppose  $D_F\in \Dc$ is the disk realizing the edge $F$. Then $D_F$ has a point $p$ on its boundary that does not belong to the closure of any other disk $D'\in \Dc$. Take a small circle $C$ that is tangent to $D$ at $p$. If we chose the radius of $C$ to be small enough, then $C$ will not intersect any of the disks. Now take $n=|V(T)|$ copies of $D_F$ and rotate them slightly around the center of $C$. If all the rotations are small, the resulting disks will be $\eps$-close to $D_F$ and by Observation \ref{obs:pert} they will contain the same points as $D_F$. Also, if the angles of the rotations are different, then each new disk has a point on its boundary that does not belong to the closure of any other disk. 

Then we enlarge each copy of $D_F$ slightly, so that they intersect $C$ but some part of their boundary remains uncovered by other disks. Place the points $q_1,\dots, q_{n}$ in the intersections and use Theorem $\ref{thm:construction}$ to realize $\Hc(T)$. Since each disk in the realization of $\Hc(T)$ is close to $C$, they do not contain any point of $\Pc$. 
\end{proof}

 \section{Stabbed unit disks}\label{sec:4}

Here we prove Theorem \ref{thm:unit}, that given a fixed origin $o$, any point set $\Pc$ can be $k$-colored such that every unit disk that contains the origin and at least $8k-7$ points of $\Pc$, contains all $k$ colors.

We call unit disks containing $o$ \emph{stabbed} unit disks.
Take a representative disk for every $\Pc'\subset \Pc$ that can be obtained as the intersection of a stabbed unit disk and $\Pc$.
Since $\Pc$ is finite, the collection of these representative disks, $\Dc$, is also finite.
We divide the plane into four quarters by two perpendicular lines through $o$.


\begin{lemma}\label{lemma:disk2pseudo}
The boundaries of two stabbed unit disks intersect at most once in each quarter.
\end{lemma}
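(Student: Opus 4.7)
\medskip

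\noindent\textbf{Proof proposal.} The plan is to show a stronger statement: if the boundaries of two distinct stabbed unit disks $D_1,D_2$ meet in two points $p$ and $q$, then the angle $\angle p\,o\,q$ is at least $\pi/2$. Since each quarter is an open sector of angle exactly $\pi/2$ at $o$, this immediately forces $p$ and $q$ to lie in distinct quarters. Note that two distinct unit circles always meet in at most two points, so this is the only case to worry about.

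To prove the angle bound, I will first locate the centers $c_1,c_2$ with respect to the chord $pq$. Since $|c_i-p|=|c_i-q|=1$, both centers lie on the perpendicular bisector of the segment $pq$, passing through its midpoint $m=(p+q)/2$. Because the two circles are distinct but share the same chord, the two centers are reflections of each other through $m$, so we can write $c_1=m+tu$ and $c_2=m-tu$, where $u$ is a unit vector perpendicular to $pq$ and $t=\sqrt{1-|pq|^2/4}$. Placing $o$ at the origin, the hypothesis that $o\in D_1\cap D_2$ gives $|c_1|^2\le 1$ and $|c_2|^2\le 1$; adding these and using the parallelogram identity,
\[
2 \;\ge\; |c_1|^2+|c_2|^2 \;=\; |m+tu|^2+|m-tu|^2 \;=\; 2|m|^2 + 2t^2 \;=\; 2|m|^2+2-\tfrac{1}{2}|pq|^2,
\]
which simplifies to $|m|\le |pq|/2$. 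Thus $o$ lies in the closed disk with diameter $pq$, and Thales' theorem gives $\angle p\,o\,q\ge \pi/2$, as desired.

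Finally, since the two perpendicular lines through $o$ split the plane into four open sectors of angle $\pi/2$, any two points in the same open quarter form an angle strictly less than $\pi/2$ at $o$. Combined with the bound above, $p$ and $q$ cannot lie in the same quarter. The main (and essentially only) step is the symmetric placement of $c_1,c_2$ relative to $m$ together with the sum $|c_1|^2+|c_2|^2\le 2$; once this is in place, Thales' theorem finishes the proof with no further case analysis.
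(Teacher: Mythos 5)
Your proof is correct, and it establishes the same key fact as the paper's argument, namely that the intersection lens of the two stabbed unit disks lies inside the disk having the common chord $pq$ as diameter, so that Thales' theorem forces $\angle poq\ge \pi/2$. The paper reaches this via a qualitative observation (equal radii imply each bounding arc of the lens is a minor arc), while you instead give a short algebraic derivation: placing the centers symmetrically as $m\pm tu$ and summing $|c_1|^2+|c_2|^2\le 2$ with the parallelogram identity to get $|m|\le |pq|/2$. The computation is a nice, self-contained way to make the equal-radius hypothesis do its work explicitly, but the underlying geometry and the final appeal to Thales are the same as in the paper, so this is essentially the same proof with the key inequality made quantitative rather than argued by arc lengths.
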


(See Figure \ref{fig:disk2pseudo}.)

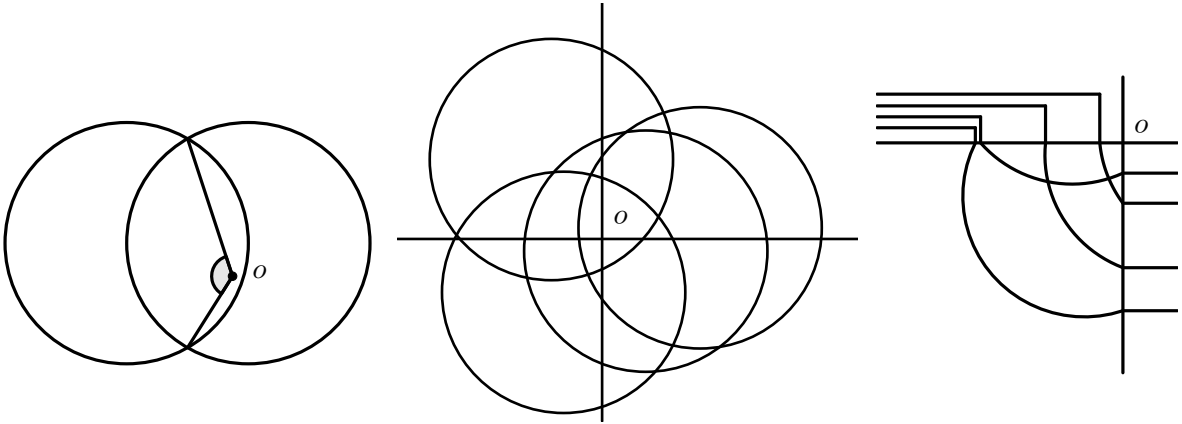
\begin{figure}[!ht]
    \centering
    \begin{tikzpicture}[line cap=round,line join=round,>=triangle 45,x=1.0cm,y=1.0cm,scale=1.6]
\clip(-3.042386084419488,0.5245277774578191) rectangle (0.06702136573879124,3.437550519830568);
\draw [shift={(-1.129785553382904,1.7262166228024929)},line width=1.2pt,fill=black,fill opacity=0.10000000149011612] (0,0) -- (107.99400368162406:0.17179046685957342) arc (107.99400368162406:237.99022165733754:0.17179046685957342) -- cycle;
\draw [line width=1.2pt] (-2.,2.) circle (1.cm);
\draw [line width=1.2pt] (-1.,2.) circle (1.cm);
\draw [line width=1.2pt] (-1.5,2.8660254037844384)-- (-1.129785553382904,1.7262166228024929);
\draw [line width=1.2pt] (-1.129785553382904,1.7262166228024929)-- (-1.5,1.1339745962155614);
\draw (-1.05,1.9) node[anchor=north west] {$o$};

\begin{scriptsize}
\draw [fill=black] (-1.129785553382904,1.7262166228024929) circle (1.0pt);
\end{scriptsize}
\end{tikzpicture}
    \begin{tikzpicture}[rotate=180,transform shape,line cap=round,line join=round,>=triangle 45,x=1.6cm,y=1.6cm,scale=1.0]
\clip(-0.1,0.04556402153881431) rectangle (3.6831530352809585,3.5102743023700134);
\draw [line width=1.pt] (2.316184676178499,2.442482615310425) circle (1.6cm);
\draw [line width=1.pt] (2.416942502776672,1.341943119043924) circle (1.6cm);
\draw [line width=1.pt] (1.194319109692564,1.9079677066168883) circle (1.6cm);
\draw [line width=1.pt] (2.,0.04556402153881431) -- (2.,3.5102743023700134);
\draw [line width=1.pt,domain=-0.2442040189550003:3.6831530352809585] plot(\x,{(--8.-0.*\x)/4.});
\draw [line width=1.pt] (1.6411533875648765,2.1011220617338386) circle (1.6cm);
\draw (1.7,1.96) node[anchor=north west] {$o$};
\begin{scriptsize}
\end{scriptsize}
\end{tikzpicture}
    \begin{tikzpicture}[rotate=180,transform shape,line cap=round,line join=round,>=triangle 45,x=1.6cm,y=1.6cm,scale=1.0]
\clip(1.5509114738689072,1.0541009213295384) rectangle (4.021862542863723,4.305646615436978);
\draw [line width=1.2pt] (2.,1.4541009213295384) -- (2.,3.905646615436978);
\draw [line width=1.2pt,domain=1.5509114738689072:4.021862542863723] plot(\x,{(--8.-0.*\x)/4.});
\draw (1.7,2.0) node[anchor=north west] {$o$};
\draw [shift={(2.316184676178499,2.442482615310425)},line width=1.2pt]  plot[domain=-0.4583651638222923:1.892501460857638,variable=\t]({1.*1.*cos(\t r)+0.*1.*sin(\t r)},{0.*1.*cos(\t r)+1.*1.*sin(\t r)});
\draw [shift={(2.416942502776672,1.341943119043924)},line width=1.2pt]  plot[domain=0.7182352290576206:2.000875209464068,variable=\t]({1.*1.*cos(\t r)+0.*1.*sin(\t r)},{0.*1.*cos(\t r)+1.*1.*sin(\t r)});
\draw [shift={(1.194319109692564,1.9079677066168883)},line width=1.2pt]  plot[domain=0.09216270912031764:0.6339722821812452,variable=\t]({1.*1.*cos(\t r)+0.*1.*sin(\t r)},{0.*1.*cos(\t r)+1.*1.*sin(\t r)});
\draw [shift={(1.6411533875648765,2.1011220617338386)},line width=1.2pt]  plot[domain=-0.1012951997887912:1.2037644160306933,variable=\t]({1.*1.*cos(\t r)+0.*1.*sin(\t r)},{0.*1.*cos(\t r)+1.*1.*sin(\t r)});
\draw [line width=1.2pt,domain=1.5509114738689072:2.0] plot(\x,{(-0.9745474471442748-0.*\x)/-0.2873770687361441});
\draw [line width=1.2pt,domain=1.5509114738689072:2.0] plot(\x,{(-0.7935364338000417-0.*\x)/-0.2615032355529523});
\draw [line width=1.2pt,domain=1.5509114738689072:2.0] plot(\x,{(-0.7508803194160878-0.*\x)/-0.3003139853277399});
\draw [line width=1.2pt,domain=1.5509114738689072:2.0] plot(\x,{(-0.47213377277472457-0.*\x)/-0.20975556918656868});
\draw [line width=1.2pt,domain=2.190075132508718:4.021862542863723] plot(\x,{(--0.3633578038262244-0.*\x)/0.22760975299711417});
\draw [line width=1.2pt,domain=2.636027414078132:4.021862542863723] plot(\x,{(--0.5065638785122175-0.*\x)/0.2991341350915362});
\draw [line width=1.2pt,domain=3.212961758372311:4.021862542863723] plot(\x,{(--0.6432929175273499-0.*\x)/0.34317178719396013});
\draw [line width=1.2pt] (2.190075132508718,2.)-- (2.190075132508718,1.5964070038370943);
\draw [line width=1.2pt] (2.636027414078132,2.)-- (2.636027414078132,1.693433878274064);
\draw [line width=1.2pt,domain=3.1699107235047186:4.021862542863723] plot(\x,{(--0.6197804599464174-0.*\x)/0.34741207228676485});
\draw [line width=1.2pt] (3.1699107235047186,2.)-- (3.1699107235047186,1.7839922944152362);
\draw [line width=1.2pt] (3.212961758372311,1.874550710556408)-- (3.212961758372311,2.);
\end{tikzpicture}
    \caption{From stabbed disks to pseudolines.}
    \label{fig:disk2pseudo}
\end{figure}

\begin{proof}
Denote the two intersection points of the boundary of the two disks, $D_1$ and $D_2$, by $x_1$ and $x_2$.
Since $D_1$ and $D_2$ have equal radii, the length of the arc they contain from each other is less than half of their perimeter.
Thus, using Thales's theorem, for any inner point $p$ of $D_1\cap D_2$ the angle $x_1px_2$ is at least $90^\circ$.
This proves the statement as $o\in D_1\cap D_2$.
\end{proof}

This means that in each quarter the boundaries of the disks behave as pseudolines.
Let us partition our point set $\Pc$ into four parts, $\Pc_1,\Pc_2,\Pc_3,\Pc_4$, depending on which quarter the points are in.
We will color each $\Pc_i$ separately.

Denote the quarter that contains the points of $\Pc_i$ by $Q_i$.
Prolong the section of the boundaries falling into this quarter, i.e., $D\cap Q_i$ for each $D\in \Dc$, such that they do not intersect outside $Q_i$ and all of them stretches from ``left infinity'' to ``right infinity''.
Without loss of generality, we can suppose that $o$ is above all these curves.
The (upward) regions that are bounded by such curves form (upward) \emph{pseudohalfplanes}.
Generalizing a theorem of Smorodinsky and Yuditsky \cite{MR2844088}, the following polychromatic coloring result is known about them.

\begin{theorem}[Keszegh-P\'alv\"olgyi \cite{abafree}]
 Given a finite collection of points and pseudohalfplanes, the points can be $k$-colored such that every pseudohalfplane that contains at least $2k-1$ points will contain all $k$ colors.
\end{theorem}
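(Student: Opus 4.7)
I would prove this by induction on $k$, with the inductive step reduced to constructing a shallow hitting set.

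The base case $k=1$ is immediate: a pseudohalfplane containing at least $2k-1=1$ point trivially contains the unique color. For the inductive step, assuming the statement holds for $k-1$, the plan is to exhibit a set $S \subseteq P$ (a ``$2$-shallow hitting set'') satisfying
\begin{enumerate}
    \item[(i)] every pseudohalfplane containing at least $2$ points of $P$ meets $S$, and
    \item[(ii)] every pseudohalfplane meets $S$ in at most $2$ points.
\end{enumerate}
Given such an $S$, I would color $S$ with color $k$ and apply the inductive hypothesis to $P \setminus S$ with $k-1$ colors. A pseudohalfplane $H$ with $|H \cap P| \ge 2k-1$ then (a) meets $S$ by (i), supplying color $k$, and (b) retains $|H \cap (P \setminus S)| \ge (2k-1)-2 = 2(k-1)-1$ points by (ii), so by induction all of colors $1,\dots,k-1$ also appear in $H$.

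The core task is building $S$. The idea is to exploit the structural property that the boundaries of upward pseudohalfplanes form an arrangement of $x$-monotone pseudolines (any two crossing at most once). Sweeping across this arrangement yields a linear ordering of $P$ in which each pseudohalfplane corresponds to a union of contiguous ``intervals'' behaving in an \emph{ABA-free} manner: for two pseudohalfplanes $H_1, H_2$, no pair of points of $H_1\setminus H_2$ sandwiches a point of $H_2\setminus H_1$ in the order. I would let $S = S_1 \cup S_2$, where $S_1$ is the ``first envelope'' consisting of the topmost point of each pseudohalfplane along the sweep, and $S_2$ is the analogous first envelope of $P \setminus S_1$.

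The main obstacle I anticipate is verifying property (ii), that no pseudohalfplane catches three points of $S$. This is precisely where the ABA-free property has to be invoked: if some pseudohalfplane $H$ contained three points $a<b<c$ of $S$, one could combine $H$ with the pseudohalfplanes that certified $a$ and $c$ as envelope points to extract a forbidden ABA pattern, contradicting the fact that the underlying pseudolines cross at most once. Property (i) in contrast is routine: if a pseudohalfplane has at least $2$ points of $P$ and misses $S_1$, then none of its points are topmost on the sweep, so its top point lies on the second envelope $S_2$. The heart of the proof is thus distilling the pseudoline property into the ABA-free combinatorial condition and using that to bound envelope intersections.
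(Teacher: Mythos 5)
This theorem is not proved in the paper under review at all—it is quoted from \cite{abafree} and used as a black box in Section 4—so I can only judge your argument on its own merits, and it has a genuine gap at its core step: the set $S$ with properties (i) and (ii) does not exist in general, already for honest halfplanes. Take $n\ge 6$ points $p_1,\dots,p_n$ on the parabola $y=-x^2$, ordered by $x$-coordinate, and let the collection consist of all upward halfplanes; the hyperedges are then exactly the sets of consecutive points, including every adjacent pair $\{p_{2i-1},p_{2i}\}$ (lower the chord through the pair slightly) and the whole set $\mathcal{P}$ (a halfplane above a very low line). Property (i) forces $S$ to meet each of the disjoint pairs $\{p_1,p_2\},\{p_3,p_4\},\{p_5,p_6\}$, so $|S|\ge 3$, and then the halfplane containing all of $\mathcal{P}$ contains at least $3$ points of $S$, contradicting (ii). The obstruction is robust: if you only require hitting of $t$-heavy edges, the $\lfloor n/t\rfloor$ disjoint windows of size $t$ still force $\Omega(n/t)$ points of $S$ inside the full edge, so no ``envelope'' construction (your $S_1\cup S_2$) can produce a hitting set that is $2$-shallow with respect to \emph{every} pseudohalfplane, and the step where you planned to derive (ii) from ABA-freeness cannot be carried out.

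What the induction actually needs is weaker and must be formulated relative to the edge size: the color class you peel off has to hit every pseudohalfplane with at least $2k-1$ points, while leaving at least $2k-3$ points in it—so its depth inside an edge may grow with the edge, and the binding case is edges of size exactly $2k-1$, not all edges. Constructing such a class is the real content of the result, and it is what the ABA-free/shallow-hitting-set machinery of \cite{abafree} (generalizing the halfplane argument of \cite{MR2844088}) is built to do. Note also that your sweep reduction, as written, only handles the upward family, whereas the statement allows an arbitrary collection of pseudohalfplanes (both sides of the pseudolines); keeping the bound at $2k-1$ rather than doubling it when both families are present requires an additional idea. Your instincts—order the points by a sweep, exploit ABA-freeness, peel off one color class and recurse—point in the right direction, but as it stands the pivotal lemma of your proposal is false.
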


We can apply this theorem to $\Pc_i$ (ignoring $\Pc\setminus \Pc_i$) and our regions.
From this we get that any stabbed disk that contains at least $2k-1$ points of $\Pc$ from the quarter $Q_i$ will contain all $k$ colors.
Repeating this for all four $\Pc_i$, we obtain by the pigeonhole principle that any stabbed disk containing at least $4(2k-2)+1=8k-7$ points of $\Pc$ contains all $k$ colors.

For $k=2$, this gives that 9 points per disk are enough, though this is probably far from being tight.
From below, we could only find a construction showing that 3 points per disk are not enough.

\section{Generalizations to other shapes and connection to clustering}\label{sec:5}

\subsubsection*{Theorem \ref{thm:gen_main}: Generalizing Theorem \ref{thm:main}}

Let $(\Pc,\Sc)$ be a range space where $\Sc$ contains homothets of a given convex sets $C$. Theorem~\ref{thm:main} shows that if $C$ is a disk, then the hypergraph $\Hc(\Pc,\Sc)$ might not be three-colorable. For which other $C$ convex sets can we generalize this statement?  Theorem \ref{thm:gen_main} shows that the answer to this question is a surprisingly wide family. We omit the detailed proof of Theorem \ref{thm:gen_main}, the main ideas are the following. 

It is not hard to see, that to achieve an appropriate version of Theorem \ref{lemma:step}, it is enough to assume that $C$ is closed and strictly convex. Furthermore, since we can restrict ourselves to an arbitrarily small portion of the boundary of $C$, the non-two-colorable construction of Theorem \ref{thm:construction} can be achieved by any closed $C$  that is strictly convex in the neighbourhood of a point on the boundary. 

The geometric extension in Lemma \ref{lemma:extension} requires a bit more. Let $C$ be any closed  convex set in the plane, that has two parallel supporting lines such that $C$ is strictly convex in some neighbourhood of the two points of tangencies. When we extend through an edge, we can use one of these neighbourhoods to correctly place the new points and the other neighbourhood to create the extending hypergraph on the new points.

\subsubsection*{Theorems \ref{thm:convex} and \ref{thm:polygon}: Generalizing Theorem \ref{thm:unit}}

The proofs of Theorems \ref{thm:convex} and \ref{thm:polygon} follow the same steps as of Theorem \ref{thm:unit}. We divide the plane into regions using rays starting at $o$ and then we show that in each region the boundaries of the sets behave as pseudolines. Here we provide the sketch of the proof of Theorem \ref{thm:convex}, but leave the details to the interested reader.

\begin{proof}[Sketch of the proof of Theorem \ref{thm:convex}]
The proof of Theorem \ref{thm:unit} relied on the fact that if $p$ is a point in the intersection of two unit disks, then it sees the two intersection points $x_1,x_2$ in an angle that is at least $\frac{\pi}{2}$. For a general convex set $C$ this angle might be arbitrarily small if the set is thin enough. 

To avoid this problem, we can apply an affine transformation to $C$ to make its width more even from different directions. 
An affine transformation does not change the conclusion of Threom \ref{thm:convex}, so it is indeed enough to argue for the affine copy.
One way to achieve this is via the John ellipsoid of the set, i.e., the largest volume ellipsoid contained in the set. Take the affine transformation that changes the John ellipsoid to be the unit circle. It is well known that in this case the set is contained in the circle whose radius is 2 and shares the center with the John ellipsoid.

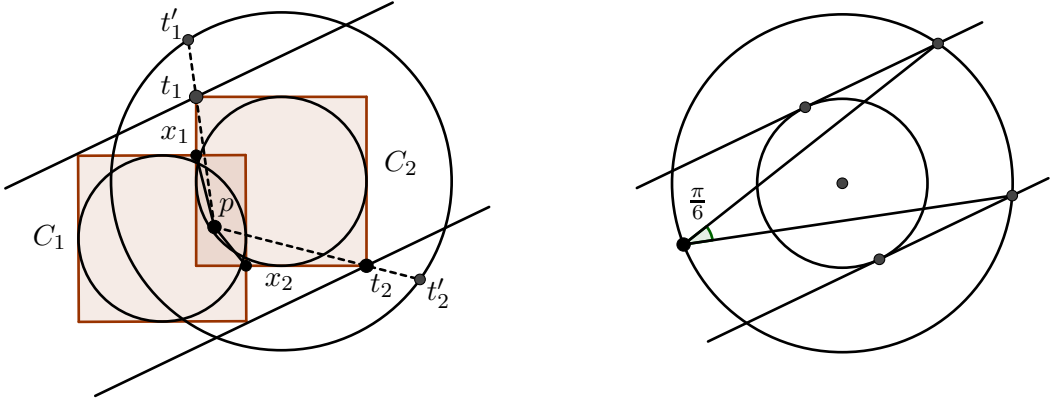
\begin{figure}
    \centering
    \definecolor{qqwuqq}{rgb}{0.,0.39215686274509803,0.}
\definecolor{uuuuuu}{rgb}{0.26666666666666666,0.26666666666666666,0.26666666666666666}
\definecolor{zzttqq}{rgb}{0.6,0.2,0.}
        \scalebox{1.0}{
       \begin{tikzpicture}[line cap=round,line join=round,>=triangle 45,x=1.4cm,y=1.4cm]
\clip(-9.281040768468834,5.474626195181661) rectangle (0.5511986473569976,9.403849789928175);
\fill[line width=1.pt,color=zzttqq,fill=zzttqq,fill opacity=0.10000000149011612] (-8.572840106661644,6.287128046219646) -- (-6.9993664458295335,6.293671632384209) -- (-7.005910031994096,7.8671452932163195) -- (-8.579383692826207,7.860601707051757) -- cycle;
\fill[line width=1.pt,color=zzttqq,fill=zzttqq,fill opacity=0.10000000149011612] (-7.470020993120913,6.818525508007105) -- (-5.870020993120912,6.818525508007105) -- (-5.870020993120912,8.418525508007106) -- (-7.470020993120913,8.418525508007107) -- cycle;
\draw [shift={(-2.8910910284291473,7.0197866384353835)},line width=1.pt,color=qqwuqq,fill=qqwuqq,fill opacity=0.10000000149011612] (0,0) -- (8.536644550474122:0.27541286879063953) arc (8.536644550474122:38.536644550473646:0.2754128687906395) -- cycle;
\draw [line width=1.pt,color=zzttqq] (-8.572840106661644,6.287128046219646)-- (-6.9993664458295335,6.293671632384209);
\draw [line width=1.pt,color=zzttqq] (-6.9993664458295335,6.293671632384209)-- (-7.005910031994096,7.8671452932163195);
\draw [line width=1.pt,color=zzttqq] (-7.005910031994096,7.8671452932163195)-- (-8.579383692826207,7.860601707051757);
\draw [line width=1.pt,color=zzttqq] (-8.579383692826207,7.860601707051757)-- (-8.572840106661644,6.287128046219646);
\draw [line width=1.pt,color=zzttqq] (-7.470020993120913,6.818525508007105)-- (-5.870020993120912,6.818525508007105);
\draw [line width=1.pt,color=zzttqq] (-5.870020993120912,6.818525508007105)-- (-5.870020993120912,8.418525508007106);
\draw [line width=1.pt,color=zzttqq] (-5.870020993120912,8.418525508007106)-- (-7.470020993120913,8.418525508007107);
\draw [line width=1.pt,color=zzttqq] (-7.470020993120913,8.418525508007107)-- (-7.470020993120913,6.818525508007105);
\draw [line width=1.pt] (-7.78937506932787,7.077136669717983) circle (1.101441087cm);
\draw [line width=1.pt] (-6.670020993120913,7.618525508007106) circle (1.12cm);
\draw [line width=1.pt] (-6.670020993120913,7.618525508007106) circle (2.24cm);
\draw [line width=1.pt] (-7.298188442770469,7.186097411780155)-- (-7.470020993120914,7.865215200492397);
\draw [line width=1.pt] (-7.298188442770469,7.186097411780155)-- (-7.001549149388184,6.818525508007104);
\draw [line width=1.pt,dash pattern=on 2pt off 2pt] (-7.298188442770469,7.186097411780155)-- (-7.545228411797118,8.957932830032092);
\draw [line width=1.pt,dash pattern=on 2pt off 2pt] (-7.298188442770469,7.186097411780155)-- (-5.367573040981403,6.6892089139619815);
\draw [line width=1.pt] (-9.262206251129468,7.551713910625877)-- (-5.624018069107316,9.311366656088357);
\draw [line width=1.pt] (-4.718121481176578,7.3756553312267314)-- (-8.418045339743387,5.586143395615268);
\draw [line width=1.pt] (-1.4,7.6) circle (1.12cm);
\draw [line width=1.pt] (-1.4,7.6) circle (2.24cm);
\draw [line width=1.pt] (-2.8910910284291473,7.0197866384353835)-- (-0.5009271786392697,8.9235059735001);
\draw [line width=1.pt] (0.1957262057919096,7.483133083599534)-- (-2.8910910284291473,7.0197866384353835);
\draw (-2.95,7.65) node[anchor=north west] {$\frac{\pi}{6}$};
\draw [line width=1.pt] (-3.3302819727798507,7.555055062687216)-- (-0.08708406823054204,9.123666088642);
\draw [line width=1.pt] (0.535675243661224,7.647553456564294)-- (-2.661469147543755,6.101216757043471);
\draw (-7.35,7.55) node[anchor=north west] {$p$};
\draw (-7.90,8.237935312047783) node[anchor=north west] {$x_1$};
\draw (-6.921670525829022,6.84251011017519) node[anchor=north west] {$x_2$};
\draw (-7.9,8.715317617951566) node[anchor=north west] {$t_1$};
\draw (-5.93018419818272,6.833329681215502) node[anchor=north west] {$t_2$};
\draw (-5.8,8.0) node[anchor=north west] {$C_2$};
\draw (-9.1,7.3) node[anchor=north west] {$C_1$};

\draw (-5.4,6.8) node[anchor=north west] {$t_2'$};
\draw (-7.9,9.35) node[anchor=north west] {$t_1'$};
\begin{scriptsize}
\draw [fill=black] (-5.870020993120912,6.818525508007105) circle (2.5pt);
\draw [fill=uuuuuu] (-7.470020993120913,8.418525508007107) circle (2.5pt);
\draw [fill=black] (-7.298188442770469,7.186097411780155) circle (2.5pt);
\draw [fill=black] (-7.470020993120914,7.865215200492397) circle (2.0pt);
\draw [fill=black] (-7.001549149388184,6.818525508007104) circle (2.0pt);
\draw [fill=uuuuuu] (-7.545228411797118,8.957932830032092) circle (2.0pt);
\draw [fill=uuuuuu] (-5.367573040981403,6.6892089139619815) circle (2.0pt);
\draw [fill=uuuuuu] (-1.4,7.6) circle (2.0pt);
\draw [fill=uuuuuu] (-1.7483266922155898,8.320186444950282) circle (2.0pt);
\draw [fill=uuuuuu] (-1.0516733077844096,6.879813555049715) circle (2.0pt);
\draw [fill=black] (-2.8910910284291473,7.0197866384353835) circle (2.5pt);
\draw [fill=uuuuuu] (-0.5009271786392697,8.9235059735001) circle (2.0pt);
\draw [fill=uuuuuu] (0.1957262057919096,7.483133083599534) circle (2.0pt);
\end{scriptsize}
\end{tikzpicture}
}
    \caption{Angle $x_1px_2$ is at least $\frac{\pi}{6}$.}
    \label{fig:my_label}
\end{figure}

Just like in the proof of Lemma \ref{lemma:disk2pseudo}, we can show that the angle $x_1px_2$ is at least $\frac{\pi}{6}$. Let $C_1$ and $C_2$ be two translates of $C$. $C_1$ and $C_2$ have two common tangent lines, let $t_1,t_2$ be the tangent points to these lines on $C_2$. Let $t'_1,t'_2$ be the projections of $t_1,t_2$ to the circle of radius 2 around $C_2$. Now $x_1px_2$ is bigger than angle $t_1xt_2$. Clearly the distance of $t'_1$ and $t'_2$ is at least 2, since they are separated by the common tangent lines. Hence, $t'_1xt'_2$ is at least as big as the inscribed angle corresponding to a chord of length $2$, which is $\frac{\pi}{6}$. Therefore, we can divide the plane into 12 regions and in each region the boundaries of the sets behave as pseudolines and from here the proof is the same, giving $12(2k-2)+1=12k-23$.     
\end{proof}

We could not rule out the possibility that with a suitable affine transformation the $x_1px_2$ angle would be always at least $\frac\pi2$, just like for disks.
This is a nice geometric question.

\begin{proof}[Proof of Theorem \ref{thm:polygon}]
 Let $P$ be a fixed polygon and let $v_1, v_2, \dots, v_n$ be the vertices of $P$. Let $\alpha_{ij}=\inf \{\angle v_iqv_{j}~|~i\in \{1,\dots,n\}, q\in P\setminus\overline{v_iv_{j}}  \}$. 
 It is not hard to see that this angle $\alpha_{ij}$ is minimized for a vertex $v$.  
 This also implies $\alpha_{ij}>0$.
 Let $\alpha=\min_{i,j} \alpha_{ij}$.
 
 
 Divide the plane into regions around $o$ such that each region is a cone whose angle is less than $\alpha$.
 A homothet of $P$ that contains $o$ can have at most one vertex in each region, otherwise they would be visible from an angle $\alpha$ from $o$.
 Let $P'$ and $P''$ be  homotethic copies of $P$ that contain $o$. 
 We will show that $P'$ and $P''$ intersects at most once in each region. 
 
 Suppose there is a region $R$ where they intersect twice. The boundary of $R$ intersects at most two sides of each polygon, since they are convex and contain $o$. There are essentially two ways $P'$ and $P''$ can intersect (see Figure \ref{fig:polys}). Either the two intersection points of $P'$ and $P''$ fall on the same side of one of the polygons, or not.

\begin{figure}[!h]
    \centering
    \newcommand\mlw{1.0pt}
\begin{tikzpicture}[line cap=round,line join=round,>=triangle 45,x=1.0cm,y=1.0cm]
\clip(-0.14610772782179205,0.8) rectangle (12.508834249795028,6.046957595291608);
\draw [line width=\mlw,domain=-0.24610772782179205:3.0] plot(\x,{(-7.--2.*\x)/-1.});
\draw [line width=\mlw,domain=3.0:12.508834249795028] plot(\x,{(-5.--2.*\x)/1.});
\draw [line width=\mlw,domain=-0.24610772782179205:9.0] plot(\x,{(-19.--2.*\x)/-1.});
\draw [line width=\mlw,domain=9.0:12.508834249795028] plot(\x,{(-17.--2.*\x)/1.});
\draw [line width=\mlw] (1.,4.)-- (4.37524806116981,4.845405090153813);
\draw [line width=\mlw] (5.0,4.)-- (4.37524806116981,4.845405090153813);
\draw [line width=\mlw] (1.619384225742496,2.888746480496106)-- (3.589617577041238,5.330647447997922);
\draw [line width=\mlw] (3.589617577041238,5.330647447997922)-- (4.135911212951387,2.317619788363592);
\draw [line width=\mlw] (7.379129324014351,2.973755995612246)-- (9.089030965941244,3.3203576797866106);
\draw [line width=\mlw] (9.089030965941244,3.3203576797866106)-- (10.590971597363515,2.973755995612246);
\draw [line width=\mlw] (8.187866587087882,1.5642424799698322)-- (9.089030965941244,5.515501679557582);
\draw [line width=\mlw] (9.089030965941244,5.515501679557582)-- (9.8284478921799,1.58734925891479);
\draw (9.204564860666034,1.1252136800156378) node[anchor=north west] {$o$};
\draw (3.2661226718118255,1.1945340168505105) node[anchor=north west] {$o$};
\draw (1.8,4.7) node[anchor=north west] {$s$};
\draw (8.210973366032839,4.7067644164840665) node[anchor=north west] {};
\draw (9.48184620800553,4.799191532263897) node[anchor=north west] {};
\draw (8.0,3.6) node[anchor=north west] {$s$};
\draw (3.3585497875916577,4.036667827080296) node[anchor=north west] {};
\draw (2.503598966628211,4.082881384970212) node[anchor=north west] {};
\draw (9.227671639610993,3.112396669281992) node[anchor=north west] {};
\begin{scriptsize}
\draw [fill=black] (3.,1.) circle (1.5pt);
\draw [fill=black] (5.,4.) circle (1.5pt);
\draw [fill=black] (9.,1.) circle (1.5pt);
\draw [fill=black] (1.,4.) circle (1.5pt);
\draw [fill=black] (4.37524806116981,4.845405090153813) circle (1.5pt);
\draw [fill=black] (1.619384225742496,2.888746480496106) circle (1.5pt);
\draw [fill=black] (3.589617577041238,5.330647447997922) circle (1.5pt);
\draw [fill=black] (4.135911212951387,2.317619788363592) circle (1.5pt);
\draw [fill=black] (7.379129324014351,2.973755995612246) circle (1.5pt);
\draw [fill=black] (9.089030965941244,3.3203576797866106) circle (1.5pt);
\draw [fill=black] (10.590971597363515,2.973755995612246) circle (1.5pt);
\draw [fill=black] (8.187866587087882,1.5642424799698322) circle (1.5pt);
\draw [fill=black] (9.089030965941244,5.515501679557582) circle (1.5pt);
\draw [fill=black] (9.8284478921799,1.58734925891479) circle (1.5pt);
\end{scriptsize}
\end{tikzpicture}
\caption{The two ways $P'$ and $P''$ can intersect each other twice in a region.}
    \label{fig:polys}
\end{figure}
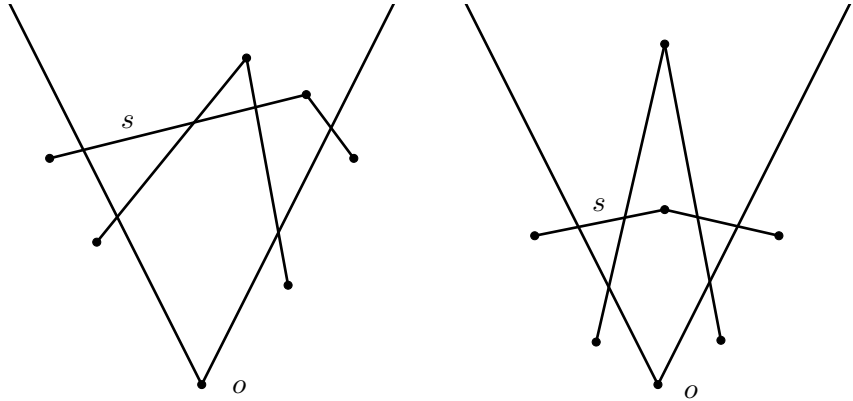

We will show that these are not possible for two homothets. In the first case let $s$ be the side that contains the two intersections. In the second case let $s$ be one of the sides that ends at the vertex in $R$ that is closer to $o$.  Let $n$ be the normal vector of the side $s$ and let us look at the extremal points of $P'$ and $P''$ in the direction of $n$. Since they are homothets of each other, the extremal points should form a side of the polygon in both $P'$ and $P''$. But for one of them we get only a vertex as an extremal point.
Therefore two homothets meet at most once in each region. The rest of the proof follows the same way as for Theorem \ref{thm:unit}.
\end{proof}

\subsubsection*{Connection to clustering for planar graphs}

We say that a graph $G$ is $c$-colorable with clustering $m$ if each vertex can be assigned one of $c$ colors so that each monochromatic component has at most $m$ vertices.
For example, planar graphs are four-colorable with clustering 1 due to the Four Color Theorem. Clustering questions gained popularity recently, see \cite{MR1957474,MR3217360,LINIAL2007115,MR4055209}.
It has been shown \cite{3colcluster} that for each $m$ there exists a planar graph that is not three-colorable with clustering $m$. We reprove this result using Theorem \ref{thm:main}. 

\begin{theorem}[Kleinberg et al.~\cite{3colcluster}]\label{thm:clustering}
 For each $m>0$ there exists a planar graph that is not three-colorable with clustering $m$.
\end{theorem}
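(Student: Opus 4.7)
The plan is to reduce Theorem~\ref{thm:clustering} to Theorem~\ref{thm:main} by converting a suitable planar disk realization of a non-three-colorable hypergraph into a planar graph. First I would apply Theorem~\ref{thm:main} with parameter $m+1$ in place of $m$, obtaining a planar realization $(\Pc,\Dc)$ of an $(m+1)$-uniform hypergraph $\Hc(\Pc,\Dc)$ that is not three-colorable; each disk $D\in\Dc$ contains exactly $m+1$ points of $\Pc$.

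From $(\Pc,\Dc)$ I would construct a planar graph $G$ whose vertex set consists of $\Pc$ together with some Steiner vertices. For each disk $D\in\Dc$, I would add (through a few Steiner vertices placed inside $D$) a connected spanning subgraph $T_D$ on the $m+1$ points of $D\cap\Pc$, drawn entirely inside $D$. The graph $G$ is the union of these subgraphs $T_D$ over all $D\in\Dc$.

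The main step is to argue that $G$ can be drawn in the plane without crossings. I would exploit the pseudo-disk property of $\Dc$ (any two disk boundaries intersect in at most two points) together with the explicit inductive construction underlying Theorem~\ref{thm:main}: the realization is assembled from copies of the construction in Theorem~\ref{thm:stabbed} (where all disks of a sub-family share a common stabbing point, which is a natural spot for Steiner vertices) and these are combined via the extension operation of Lemma~\ref{lemma:extension}, which attaches new disks next to a previously exclusive boundary arc of an existing disk. This tree-like nesting pattern should allow one to route each $T_D$ inside its disk so that subgraphs associated with different disks do not interfere.

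Granting planarity of $G$, the reduction is immediate: suppose for contradiction $G$ is three-colored with clustering $m$. For any disk $D\in\Dc$, its $m+1$ points of $\Pc$ are connected in $G$ via $T_D$, so if all of them had the same color the resulting monochromatic component would contain at least $m+1$ vertices, contradicting clustering $m$. Hence every $D\in\Dc$ contains two distinct colors, meaning the restriction of the coloring to $\Pc$ is a proper three-coloring of $\Hc(\Pc,\Dc)$, contradicting Theorem~\ref{thm:main}. The main obstacle is precisely verifying the planarity claim: that the spanning subgraphs $T_D$ can be drawn inside their respective disks in a mutually non-crossing way. This requires careful use of the pseudo-disk property combined with the recursive, tree-structured nature of the realization built in the proof of Theorem~\ref{thm:main}.
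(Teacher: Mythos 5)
Your reduction to Theorem~\ref{thm:main} is the right high-level move, and your handling of the off-by-one (using parameter $m+1$) is correct, but the proposal leaves a genuine and serious gap in exactly the place you flag: the planarity of $G$. You cannot resolve it by appealing vaguely to the pseudo-disk property and the recursive structure of the construction. In the realization built in Sections~\ref{sec:2} and~\ref{sec:3}, the disks overlap heavily and the points of different hyperedges are interleaved along the boundary of earlier disks; two descendent disks $B(u),B(v)$ with neither an ancestor of the other can still have their point sets linked in a way that would force any two connected spanning drawings $T_{B(u)},T_{B(v)}$ confined to the respective disks to cross. The construction also uses extensions (Lemma~\ref{lemma:extension}) that attach new circles tangent to a boundary arc, so the Steiner routing you propose would have to pass through these tangency bottlenecks for infinitely many nested levels, and nothing in your sketch controls that. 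In short, the one step you identify as ``the main obstacle'' is the whole theorem, and the proposal gives no argument for it.

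The paper avoids this obstacle entirely with one clean idea you are missing: after a generic perturbation (no four points concyclic), take the \emph{Delaunay graph} of $\Pc$. This graph is planar by definition, so no ad hoc Steiner routing is needed. The key lemma is then the classical fact that any subset of points that can be separated from the rest of $\Pc$ by a disk induces a \emph{connected} subgraph of the Delaunay graph. Combined with Theorem~\ref{thm:main} (each disk in $\Dc$ contains exactly $m+1$ points, and some disk is monochromatic under any three-coloring), this immediately yields a monochromatic connected subgraph on more than $m$ vertices, hence a monochromatic cluster of size $>m$. If you want to salvage your write-up, replace the Steiner-tree construction and the planarity argument with the Delaunay graph and cite (or prove) the connectivity lemma; the rest of your reduction then goes through verbatim.
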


\begin{proof}
Take the point set $\Pc$ that realizes the non-three-colorable hypergraph $\Hc^3(m)$, and perturb the points so that no four lie on a circle. Consider the Delaunay graph of this point set, that is, we connect two points if they can be separated from the rest with a circle. From Theorem \ref{thm:main} we know that any three-coloring of this graph contains $m$ monochromatic points that can be separated from the rest by a circle and it is well-known that $m$ such points always form a connected component in the Delaunay graph, thus a monochromatic cluster in our case.
\end{proof}

\section{Open questions}\label{sec:6}
One of the most interesting questions left open is about unit disks.
Is there an $m$ such that every point set in the plane can be three-colored such that every unit disk containing exactly $m$ points contains at least two colors?

In fact, this conjecture has a strengthening in the dual, cover-decomposition problem.
Is there an $m$ such that every $m$-fold covering of $\Pc$ by disks can be partitioned into three parts such that any two parts cover $\Pc$?

If there is a counterexample to these questions, then it has to be quite different from ours, as already the two-color version \cite{unsplittable} needed the realization of a different hypergraph.

Coloring questions are also interesting in higher dimensions. Suppose we have a set of points and a family of balls in $\mathbb{R}^d$. How many colors are needed to color the points such that none of the balls that contain at least $m$ points are monochromatic? Theorem \ref{thm:main} can be extended to show that in some cases we need $d+2$ colors. On the other hand, no upper bound is known already for $d=3$ that would not depend on $m$.  

\subsubsection*{Acknowledgment}
We would like to thank Bal\'azs Keszegh for useful discussions and for reading the draft of this manuscript and also our anonymous reviewers for their valuable suggestions.


\small
\bibliographystyle{abbrv}

\begin{thebibliography}{99}

\bibitem{MR4012917}
Eyal Ackerman, Bal\'azs Keszegh, and D{\"{o}}m{\"{o}}t{\"{o}}r
  P\'{a}lv\"{o}lgyi.
\newblock Coloring hypergraphs defined by stabbed pseudo-disks and {\it
  {ABAB}}-free hypergraphs.
\newblock {\em Acta Math. Univ. Comenian. (N.S.)}, 88(3):363--370, 2019.

\bibitem{homotsquare}
Eyal Ackerman, Bal{\'a}zs Keszegh, and M\'at\'e Vizer.
\newblock Coloring points with respect to squares.
\newblock {\em Discrete {\&} Computational Geometry}, 58(4):757--784, Dec 2017.

\bibitem{MR1957474}
Noga Alon, Guoli Ding, Bogdan Oporowski, and Dirk Vertigan.
\newblock Partitioning into graphs with only small components.
\newblock {\em J. Combin. Theory Ser. B}, 87(2):231--243, 2003.

\bibitem{MR3126347}
Andrei Asinowski, Jean Cardinal, Nathann Cohen, S\'{e}bastien Collette, Thomas
  Hackl, Michael Hoffmann, Kolja Knauer, Stefan Langerman, Micha{\l} Laso\'{n},
  Piotr Micek, G\"{u}nter Rote, and Torsten Ueckerdt.
\newblock Coloring hypergraphs induced by dynamic point sets and bottomless
  rectangles.
\newblock In {\em Algorithms and data structures}, volume 8037 of {\em Lecture
  Notes in Comput. Sci.}, pages 73--84. Springer, Heidelberg, 2013.

\bibitem{MR3151767}
Jean Cardinal, Kolja Knauer, Piotr Micek, and Torsten Ueckerdt.
\newblock Making triangles colorful.
\newblock {\em J. Comput. Geom.}, 4(1):240--246, 2013.

\bibitem{MR3217360}
Louis Esperet and Gwena\"{e}l Joret.
\newblock Colouring planar graphs with three colours and no large monochromatic
  components.
\newblock {\em Combin. Probab. Comput.}, 23(4):551--570, 2014.

\bibitem{MR2812512}
Matt Gibson and Kasturi Varadarajan.
\newblock Optimally decomposing coverings with translates of a convex polygon.
\newblock {\em Discrete Comput. Geom.}, 46(2):313--333, 2011.

\bibitem{wcf2}
Bal{\'{a}}zs Keszegh.
\newblock Coloring half-planes and bottomless rectangles.
\newblock {\em Computational geometry}, 45(9):495--507, 2012.

\bibitem{octants}
Bal\'{a}zs Keszegh and D\"{o}m\"{o}t\"{o}r P\'{a}lv\"{o}lgyi.
\newblock Octants are cover decomposable.
\newblock {\em Discrete Comput. Geom.}, 47:598--609, 2012.

\bibitem{MR3216669}
Bal\'{a}zs Keszegh and D\"{o}m\"{o}t\"{o}r P\'{a}lv\"{o}lgyi.
\newblock Convex polygons are self-coverable.
\newblock {\em Discrete Comput. Geom.}, 51(4):885--895, 2014.

\bibitem{moreoctants}
Bal\'{a}zs Keszegh and D\"{o}m\"{o}t\"{o}r P\'{a}lv\"{o}lgyi.
\newblock More on decomposing coverings by octants.
\newblock {\em J. Comput. Geom.}, 6(1):300--315, 2019.


\bibitem{3propercol}
Bal{\'{a}}zs Keszegh and D{\"{o}}m{\"{o}}t{\"{o}}r P{\'{a}}lv{\"{o}}lgyi.
\newblock Proper coloring of geometric hypergraphs.
\newblock In {\em Symposium on Computational Geometry}, volume~77 of {\em
  LIPIcs}, pages 47:1--47:15. Schloss Dagstuhl - Leibniz-Zentrum fuer
  Informatik, 2017.

\bibitem{abafree}
Bal\'{a}zs Keszegh and D\"{o}m\"{o}t\"{o}r P\'{a}lv\"{o}lgyi.
\newblock An abstract approach to polychromatic coloring: shallow hitting sets
 in {ABA}-free hypergraphs and pseudohalfplanes.
\newblock {\em J. Comput. Geom.}, 10(1):1--26, 2019.

\bibitem{3colcluster}
Jon~M. {Kleinberg}, Rajeev {Motwani}, Prabhakar {Raghavan}, and Suresh
  {Venkatasubramanian}.
\newblock Storage management for evolving databases.
\newblock In {\em Proceedings 38th Annual Symposium on Foundations of Computer
  Science}, pages 353--362, 1997.

\bibitem{kovacs}
Istv{\'a}n Kov{\'a}cs.
\newblock Indecomposable coverings with homothetic polygons.
\newblock {\em Discrete \& Computational Geometry}, 53(4):817--824, 2015.

\bibitem{LINIAL2007115}
Nathan Linial, Jiří Matoušek, Or~Sheffet, and Gábor Tardos.
\newblock Graph coloring with no large monochromatic components.
\newblock {\em Electronic Notes in Discrete Mathematics}, 29:115 -- 122, 2007.
\newblock European Conference on Combinatorics, Graph Theory and Applications.

\bibitem{MR4055209}
Sergey Norin, Alex Scott, Paul~D. Seymour, and David~R. Wood.
\newblock Clustered colouring in minor-closed classes.
\newblock {\em Combinatorica}, 39(6):1387--1412, 2019.

\bibitem{Pach86}
J\'{a}nos Pach.
\newblock Covering the plane with convex polygons.
\newblock {\em Discrete Comput. Geom.}, 1(1):73–81, December 1986.

\bibitem{unsplittable}
J\'{a}nos Pach and D{\"{o}}m{\"{o}}t{\"{o}}r P{\'{a}}lv{\"{o}}lgyi.
\newblock Unsplittable coverings in the plane.
\newblock {\em Advances in Mathematics}, 302:433--457, 2016.

\bibitem{surveycd}
J{\'a}nos Pach, D{\"o}m{\"o}t{\"o}r P{\'a}lv{\"o}lgyi, and G{\'e}za T{\'o}th.
\newblock Survey on decomposition of multiple coverings.
\newblock In {\em Geometry--Intuitive, Discrete, and Convex}, pages 219--257.
  Springer, 2013.

\bibitem{MR2364757}
J\'{a}nos Pach, G\'{a}bor Tardos, and G\'{e}za T\'{o}th.
\newblock Indecomposable coverings.
\newblock In {\em Discrete geometry, combinatorics and graph theory}, volume
  4381 of {\em Lecture Notes in Comput. Sci.}, pages 135--148. Springer,
  Berlin, 2007.


\bibitem{MR2679054}
D\"{o}m\"{o}t\"{o}r P\'{a}lv\"{o}lgyi.
\newblock Indecomposable coverings with concave polygons.
\newblock {\em Discrete Comput. Geom.}, 44(3):577--588, 2010.

\bibitem{PT10}
D{\"o}m{\"o}t{\"o}r P{\'a}lv{\"o}lgyi and G{\'e}za T{\'o}th.
\newblock Convex polygons are cover-decomposable.
\newblock {\em Discrete {\&} Computational Geometry}, 43(3):483--496, Apr 2010.

\bibitem{MR2844088}
Shakhar Smorodinsky and Yelena Yuditsky.
\newblock Polychromatic coloring for half-planes.
\newblock {\em J. Combin. Theory Ser. A}, 119(1):146--154, 2012.

\bibitem{TT07}
G\'abor Tardos and G{\'e}za T{\'o}th.
\newblock Multiple coverings of the plane with triangles.
\newblock {\em Discrete {\&} Computational Geometry}, 38:443--450, 2007.


\end{thebibliography}

\end{document}